\newcommand{\klockan}{\the\hours:{\ifnum\minutes<10 0\fi}\the\minutes}
\newcommand{\tid}{\today\ \klockan}
\newcommand{\prtid}{\smash{\raise 10mm \hbox{\LaTeX ed \tid}}}
\renewcommand{\prtid}{}
\makeatletter \pagestyle{headings} \headheight 10pt
\def\sectionmark#1{} 
\def\subsectionmark#1{}
\newcommand{\sectnr}{\ifnum \c@secnumdepth >\z@
                 \thesection.\hskip 1em\relax \fi}
\def\@evenhead{\footnotesize\rm\thepage\hfil\leftmark\hfil\llap{\prtid}}
\def\@oddhead{\footnotesize\rm\rlap{\prtid}\hfil\rightmark\hfil\thepage}
\def\tableofcontents{\section*{Contents} 
 \@starttoc{toc}}
\def\@biblabel#1{#1.}
\let\Thebibliography=\thebibliography
\renewcommand{\thebibliography}[1]{\def\@mkboth##1##2{}\Thebibliography{#1}
\addcontentsline{toc}{section}{References}
\frenchspacing 
\setlength{\@topsep}{0pt}
\setlength{\itemsep}{0pt}%
\setlength{\parskip}{0pt plus 2pt}%
}
\def\mdots@{\mathinner.\nonscript\!.%
 \ifx\next,.\else\ifx\next;.\else\ifx\next..\else
 \nonscript\!\mathinner.\fi\fi\fi}
\let\ldots\mdots@
\let\cdots\mdots@
\let\dotso\mdots@
\let\dotsb\mdots@
\let\dotsm\mdots@
\let\dotsc\mdots@
\def\vdots{\vbox{\baselineskip2.8\p@ \lineskiplimit\z@
    \kern6\p@\hbox{.}\hbox{.}\hbox{.}\kern3\p@}}
\def\ddots{\mathinner{\mkern1mu\raise8.6\p@\vbox{\kern7\p@\hbox{.}}%
    \raise5.8\p@\hbox{.}\raise3\p@\hbox{.}\mkern1mu}}
\let\Enumerate=\enumerate
\renewcommand{\enumerate}{\Enumerate%
\setlength{\@topsep}{0pt}
\setlength{\itemsep}{0pt}%
\setlength{\parskip}{0pt plus 1pt}%
\renewcommand{\theenumi}{\textup{(\alph{enumi})}}%
\renewcommand{\labelenumi}{\theenumi}%
}
\let\endEnumerate=\endenumerate
\renewcommand{\endenumerate}{\endEnumerate\ifhmode\unskip\fi}
\def\@seccntformat#1{\csname the#1\endcsname.\quad}
\newcommand{\authortitle}[2]{\author{#1}\title{#2}\markboth{#1}{#2}}
\newcommand{\art}[6]{{\sc #1, \rm #2, \it #3 \bf #4 \rm (#5), \mbox{#6}.}}
\newcommand{\auth}[2]{{#1, #2.}}
\newcommand{\artin}[3]{{\sc #1, \rm #2,  in #3.}}
\newcommand{\book}[3]{{\sc #1, \it #2, \rm #3.}}
\newcommand{\AND}{{\rm and }}
\newtheoremstyle{descriptive}%
  {\topsep}   
  {\topsep}   
  {\rmfamily} 
  {}          
  {\bfseries} 
  {.}         
  { }         
  {}          
\newtheoremstyle{propositional}%
  {\topsep}   
  {\topsep}   
  {\itshape}  
  {}          
  {\bfseries} 
  {.}         
  { }         
  {}          
\theoremstyle{propositional}
\newtheorem{thm}{Theorem}[section]
\newtheorem{prop}[thm]{Proposition}
\newtheorem{lem}[thm]{Lemma}
\newtheorem{cor}[thm]{Corollary}
\theoremstyle{descriptive}
\newtheorem{deff}[thm]{Definition}
\newtheorem{example}[thm]{Example}
\newtheorem{remark}[thm]{Remark}
\newtheorem{openprobs}[thm]{Open problems}
\renewenvironment{proof}[1][\proofname]{\par
  \pushQED{\qed}%
  \normalfont
  \trivlist
  \item[\hskip\labelsep
        \itshape
    #1\@addpunct{.}]\ignorespaces
}{%
  \popQED\endtrivlist\@endpefalse
} \makeatother
\def\vint{\mathop{\mathchoice%
          {\setbox0\hbox{$\displaystyle\intop$}\kern 0.22\wd0%
           \vcenter{\hrule width 0.6\wd0}\kern -0.82\wd0}%
          {\setbox0\hbox{$\textstyle\intop$}\kern 0.2\wd0%
           \vcenter{\hrule width 0.6\wd0}\kern -0.8\wd0}%
          {\setbox0\hbox{$\scriptstyle\intop$}\kern 0.2\wd0%
           \vcenter{\hrule width 0.6\wd0}\kern -0.8\wd0}%
          {\setbox0\hbox{$\scriptscriptstyle\intop$}\kern 0.2\wd0%
           \vcenter{\hrule width 0.6\wd0}\kern -0.8\wd0}}%
          \mathopen{}\int}
{\catcode`p =12 \catcode`t =12 \gdef\eeaa#1pt{#1}}      
\def\accentadjtext#1{\setbox0\hbox{$#1$}\kern   
                \expandafter\eeaa\the\fontdimen1\textfont1 \ht0 }
\def\accentadjscript#1{\setbox0\hbox{$#1$}\kern 
                \expandafter\eeaa\the\fontdimen1\scriptfont1 \ht0 }
\def\accentadjscriptscript#1{\setbox0\hbox{$#1$}\kern   
                \expandafter\eeaa\the\fontdimen1\scriptscriptfont1 \ht0 }
\def\accentadjtextback#1{\setbox0\hbox{$#1$}\kern       
                -\expandafter\eeaa\the\fontdimen1\textfont1 \ht0 }
\def\accentadjscriptback#1{\setbox0\hbox{$#1$}\kern     
                -\expandafter\eeaa\the\fontdimen1\scriptfont1 \ht0 }
\def\accentadjscriptscriptback#1{\setbox0\hbox{$#1$}\kern 
                -\expandafter\eeaa\the\fontdimen1\scriptscriptfont1 \ht0 }
\def\itoverline#1{{\mathsurround0pt\mathchoice
        {\rlap{$\accentadjtext{\displaystyle #1}
                \accentadjtext{\vrule height1.593pt}
                \overline{\phantom{\displaystyle #1}
                \accentadjtextback{\displaystyle #1}}$}{#1}}
        {\rlap{$\accentadjtext{\textstyle #1}
                \accentadjtext{\vrule height1.593pt}
                \overline{\phantom{\textstyle #1}
                \accentadjtextback{\textstyle #1}}$}{#1}}
        {\rlap{$\accentadjscript{\scriptstyle #1}
                \accentadjscript{\vrule height1.593pt}
                \overline{\phantom{\scriptstyle #1}
                \accentadjscriptback{\scriptstyle #1}}$}{#1}}
        {\rlap{$\accentadjscriptscript{\scriptscriptstyle #1}
                \accentadjscriptscript{\vrule height1.593pt}
                \overline{\phantom{\scriptscriptstyle #1}
                \accentadjscriptscriptback{\scriptscriptstyle #1}}$}{#1}}}}
\newcommand{\limplus}{{\mathchoice{\vcenter{\hbox{$\scriptstyle +$}}}
  {\vcenter{\hbox{$\scriptstyle +$}}}
  {\vcenter{\hbox{$\scriptscriptstyle +$}}}
  {\vcenter{\hbox{$\scriptscriptstyle +$}}}
}}
\newcommand{\limminus}{{\mathchoice{\vcenter{\hbox{$\scriptstyle -$}}}
  {\vcenter{\hbox{$\scriptstyle -$}}}
  {\vcenter{\hbox{$\scriptscriptstyle -$}}}
  {\vcenter{\hbox{$\scriptscriptstyle -$}}}
}}
\newcommand{\limpm}{{\mathchoice{\vcenter{\hbox{$\scriptstyle \pm$}}}
  {\vcenter{\hbox{$\scriptstyle \pm$}}}
  {\vcenter{\hbox{$\scriptscriptstyle \pm$}}}
  {\vcenter{\hbox{$\scriptscriptstyle \pm$}}}
}}
\newcommand{\setm}{\setminus}
\renewcommand{\emptyset}{\varnothing}
\newcommand{\Cp}{{C_p}}
\newcommand{\CpU}{{C_p^U}}
\newcommand{\CpE}{{C_p^E}}
\newcommand{\CpX}{{C_p^X}}
\DeclareMathOperator{\capp}{cap}
\newcommand{\cp}{\capp_p}
\DeclareMathOperator{\diam}{diam}
\DeclareMathOperator{\diverg}{div}
\DeclareMathOperator{\Lip}{Lip}
\newcommand{\Lipc}{{\Lip_c}}
\DeclareMathOperator{\spt}{supp}
\newcommand{\supp}{\spt}
\DeclareMathOperator*{\essinf}{ess\,inf}
\DeclareMathOperator*{\finelimsup}{fine\,lim\,sup}
\DeclareMathOperator*{\fineliminf}{fine\,lim\,inf}
\DeclareMathOperator*{\finelim}{fine\,lim}
\DeclareMathOperator*{\esslim}{ess\,lim}
\DeclareMathOperator*{\essliminf}{ess\,lim\,inf}
\DeclareMathOperator*{\esslimsup}{ess\,lim\,sup}
\DeclareMathOperator{\fineint}{fine-int}
\DeclareMathOperator*{\cpessinfalt}{\text{$\Cp$-}\essinf}
\newcommand{\cplimsup}{\text{$\Cp$-}\esslimsup}
\newcommand{\cpliminf}{\text{$\Cp$-}\essliminf}
\newcommand{\cplim}{\text{$\Cp$-}\esslim}
\newcommand{\bdry}{\partial}
\newcommand{\bdy}{\bdry}
\newcommand{\loc}{_{\rm loc}}
\newcommand{\fineloc}{_\textup{fine-loc}}
\newcommand{\eps}{\varepsilon}
\newcommand{\ga}{\gamma}
\newcommand{\Ga}{\Gamma}
\newcommand{\Om}{\Omega}
\renewcommand{\phi}{\varphi}
\newcommand{\p}{{$p\mspace{1mu}$}}
\newcommand{\clEp}{{\itoverline{E}\mspace{1mu}}^p}
\newcommand{\clUp}{{\overline{U}\mspace{1mu}}^p}
\newcommand{\bdyp}{\bdy_p} 
\newcommand{\R}{\mathbf{R}}
\newcommand{\eR}{{\overline{\R}}}
\newcommand{\K}{{\cal K}}
\newcommand{\Np}{N^{1,p}}
\newcommand{\Nploc}{N^{1,p}\loc}
\newcommand{\Wploc}{W^{1,p}\loc}
\newcommand{\Npploc}{N^{1,p}_{\textup{fine-loc}}}
\newcommand{\Lploc}{L^{p}\loc}
\newcommand{\ut}{\tilde{u}}
\newcommand{\ft}{\tilde{f}}
\def\cprime{{\mathsurround0pt$'$}}
\numberwithin{equation}{section}
\newcommand{\eqv}{\ensuremath{
\mathchoice{\quad \Longleftrightarrow \quad}{\Leftrightarrow}
                {\Leftrightarrow}{\Leftrightarrow}} }
\newcommand{\imp}{\ensuremath{\Rightarrow} }
\newenvironment{ack}{\medskip{\it Acknowledgement.}}{}
\begin{document}

\authortitle{Anders Bj\"orn, Jana Bj\"orn and Visa Latvala}
{The Dirichlet problem for \p-minimizers on finely open sets in metric spaces}

\author{
Anders Bj\"orn \\
\it\small Department of Mathematics, Link\"oping University, SE-581 83 Link\"oping, Sweden\\
\it \small anders.bjorn@liu.se, ORCID\/\textup{:} 0000-0002-9677-8321
\\
\\
Jana Bj\"orn \\
\it\small Department of Mathematics, Link\"oping University, SE-581 83 Link\"oping, Sweden\\
\it \small jana.bjorn@liu.se, ORCID\/\textup{:} 0000-0002-1238-6751
\\
\\
Visa Latvala \\
\it\small Department of Physics and Mathematics,
University of Eastern Finland, \\
\it\small  P.O. Box 111, FI-80101 Joensuu,
Finland\/{\rm ;} \\
\it \small visa.latvala@uef.fi, ORCID\/\textup{:} 0000-0001-9275-7331
}

\date{Preliminary version, \today}
\date{}

\maketitle

\noindent{\small
  {\bf Abstract}. We initiate the study of fine \p-(super)minimizers, associated
  with \p-harmonic functions, on finely open sets in metric spaces,
   where $1 < p < \infty$.
After having developed their basic theory,
we obtain
the \p-fine continuity of the solution of the Dirichlet problem on
a finely open set with continuous Sobolev boundary values,
as a by-product of similar pointwise results.
These results are new also on unweighted $\R^n$.
We build this theory in
a complete metric space equipped with a doubling measure
supporting a \p-Poincar\'e inequality.
}

\medskip

\noindent {\small \emph{Key words and phrases}:
Dirichlet problem,
doubling measure,
fine continuity,
fine \p-minimizer,
fine \p-superminimizer,
fine supersolution,
finely open set,
metric space,
nonlinear fine potential theory,
Poincar\'e inequality,
quasiopen set.
}
\medskip

\noindent {\small Mathematics Subject Classification (2020):
Primary:
31E05;  
 Secondary:
30L99, 
31C40, 
35J92. 
}

\section{Introduction}

Superharmonic functions play a fundamental role in the classical
potential theory.
Unlike harmonic functions (i.e.\ solutions of the Laplace equation
$\Delta u=0$), they need not be continuous but are finely
continuous.
In fact, the fine topology is the coarsest topology that
makes all superharmonic functions continuous, see Cartan~\cite{cartan46}.
The fine topology is closely related to the Dirichlet
boundary value problem
for the Laplace equation on open sets.
It follows from the famous
Wiener criterion~\cite{wiener} that a
boundary point $x_0\in\bdy\Om$ of a Euclidean domain $\Om$  is irregular for
$\Delta u=0$ if and only if $\Om\cup\{x_0\}$ is finely open, i.e.\
if the complement  $\R^n\setm\Om$ is thin at $x_0$ in a capacity
density sense. In this case, the complement and the boundary are simply too small in the potential
theoretical sense to ensure that continuous boundary data enforce
continuity of the corresponding solution at $x_0$.
These facts have lead to the development of fine potential theory and
finely (super)harmonic functions associated with $\Delta u=0$ on finely open sets,
see the monograph \cite{Fug} of Fuglede,
the papers \cite{Fug74}--\cite{Fug90},
\cite{LuMa}, \cite{Lyons80:1}, \cite{Lyons80:2},  and the book~\cite{LuMaZa}
  by Luke\v{s}--Mal\'y--Zaj\'i\v{c}ek,
which contain additional results and references.

In the nonlinear case, for equations associated with the \p-Laplacian
$\Delta_p$ and $1<p \ne 2$, the first similar study  was
conducted by Kilpel\"ainen--Mal\'y~\cite{KiMa92},
who studied \p-fine (super)solutions for such equations
on quasiopen subsets of unweighted $\R^n$.
That theory was further extended by Latvala~\cite{LatPhD}, \cite{Lat00}, in
particular for $p=n$.
Eigenvalue problems for the \p-Laplacian in quasiopen subsets of
$\R^n$ were  considered in Fusco--Mukherjee--Zhang~\cite{FuscoMZ}.
We are not  aware of any other papers
dealing with \p-fine (super)solutions, and in particular
none beyond unweighted $\R^n$.

The Wiener criterion was extended to the nonlinear theory
associated with
\p-harmonic functions on subsets of
(unweighted and weighted)
$\R^n$ in \cite{HeKiMa}, \cite{KiMa94}, \cite{Lind-Mar}, \cite{Maz70}
and \cite{Mikkonen}, and partially also to metric spaces,
  see \cite{JB-Matsue}--\cite{BMS}.
It has also been related to fine continuity of \p-superharmonic
functions on \emph{open} sets in much the same way as for the Laplacian.
Following this nonlinear development,  we define the fine topology
on metric spaces using the notion of thinness based on a Wiener type
integral, see Definition~\ref{deff-thinness}.

In this paper we continue our study of fine potential theory on metric
spaces, carried through in~\cite{BBLat1}--\cite{BBLat2},
and initiate the study of fine \p-(super)minimizers
with $1<p< \infty$.
We consider a complete metric space $X$ equipped with a doubling measure
supporting a \p-Poincar\'e inequality.
The function space naturally associated with \p-energy minimizers on
such metric spaces is the Sobolev type space $\Np$, called the Newtonian
space.

The following regularity result for solutions of the Dirichlet problem
on finely open sets is our main result, which we obtain
as a by-product of more general pointwise results.
Even in unweighted $\R^n$ and for $\Delta_pu=0$, it is more
general than the similar Theorem~5.3 in Kilpel\"ainen--Mal\'y~\cite{KiMa92}.

Here $\clUp$ is the fine closure of $U$.

\begin{thm} \label{thm-finecont-dir-intro}
Let $U \subset X$ be finely open and let
either $f \in C(U) \cap \Np(U)$ or
$f \in C(\clUp \cap \bdy U) \cap \Np(X)$, where in both cases $f$ is assumed
to be continuous as a function with values in $\eR:=[-\infty,\infty]$.
Then there is a finely continuous solution
of the Dirichlet problem in $U$ with boundary values $f$.
\end{thm}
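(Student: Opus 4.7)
The plan is to produce $u$ as the $p$-energy minimizer in the Newtonian class of functions with the prescribed boundary values, and then to establish fine continuity first at points of $U$ and afterwards at points of $\clUp\cap\bdy U$, the latter being the main obstacle. These two steps correspond, respectively, to interior regularity of fine $p$-minimizers (available from the basic theory developed earlier in the paper) and to a boundary Wiener-type comparison argument, which is presumably the content of the ``more general pointwise results'' referred to in the introduction.

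\emph{Step 1: Existence.} In the case $f\in \Np(X)$, the admissible class $\mathcal A=\{v\in \Np(X):v-f\in N^{1,p}_0(U)\}$ is non-empty (it contains $f$), convex, and closed under weak $\Np$-convergence. The Dirichlet functional $v\mapsto\int_U g_v^p\,d\mu$ is convex, coercive after subtracting $f$, and weakly lower semicontinuous on $\Np$, so the direct method of the calculus of variations yields a minimizer $u\in\mathcal A$, which by construction is a fine $p$-minimizer on $U$. The case $f\in C(U)\cap \Np(U)$ is handled in the same way, minimizing instead in $\{v\in \Np(U):v-f\in N^{1,p}_0(U)\}$.

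\emph{Step 2: Fine continuity at points of $U$.} Since $u$ is a fine $p$-minimizer, both $u$ and $-u$ are fine $p$-superminimizers on $U$. The fine-continuity result for fine $p$-superminimizers (from the basic theory part of the paper) applies to each, giving a fine upper semicontinuous and a fine lower semicontinuous representative; after modifying $u$ on a set of $\Cp$-capacity zero, which is harmless within the Newtonian framework, one obtains a representative of $u$ that is finely continuous at every point of $U$.

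\emph{Step 3 (main obstacle): Fine limits at $x_0\in\clUp\cap\bdy U$.} Fix such an $x_0$ and $\eps>0$. Continuity of $f$ at $x_0$ provides a neighbourhood $V$ of $x_0$ on which $|f-f(x_0)|<\eps$ wherever $f$ is defined. Compare $u$ with the truncations $\min(u,f+\eps)$ and $\max(u,f-\eps)$, suitably cut off to lie in $N^{1,p}_0(U\cap V)$, in the $p$-energy minimality inequality. The hypothesis $x_0\in\clUp$ prevents $U$ from being thin at $x_0$, and a Caccioppoli-plus-Poincar\'e estimate then converts the resulting $L^p$-energy control into a pointwise fine-limit bound of Wiener type, yielding
\[
f(x_0)-\eps\le \fineliminf_{U\ni x\to x_0} u(x)\le \finelimsup_{U\ni x\to x_0} u(x)\le f(x_0)+\eps.
\]
Letting $\eps\to 0$ and combining with Step~2 completes the proof. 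The hard part is the construction of admissible cut-off test functions supported in a fine neighbourhood of $x_0$, and the translation of the quantitative non-thinness encoded in $x_0\in\clUp$ into a pointwise fine-limit statement; this is the genuinely new analytic work, and everything else reduces to standard variational and Newtonian-space machinery together with the interior fine-regularity theory established earlier in the paper.
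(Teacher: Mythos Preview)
Your plan has a genuine gap in Step~2 and misidentifies the main obstacle.

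Step~2 assumes that fine $p$-superminimizers on finely open sets have finely continuous representatives as part of the ``basic theory.'' They do not: this is explicitly listed as an open problem in the paper (Section~\ref{sect-open-prob}, item~\ref{t-1} and the special case~(2)). If that result were available, Theorem~\ref{thm-finecont-dir-intro} would be immediate and the continuity hypothesis on $f$ would be superfluous. The entire point of the theorem is that, \emph{absent} a general fine-continuity result for fine minimizers, one can still obtain fine continuity of the Dirichlet solution when the boundary datum $f$ is continuous. So Step~2 is the whole theorem, not a triviality.

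Consequently Step~3 is both misplaced and unnecessary. The theorem only asserts fine continuity at points of $U$; it says nothing about fine limits at $x_0\in\clUp\cap\bdy U$ with $x_0\notin U$. The relevant ``hard'' points are those $z\in U$ lying on the metric boundary $\bdy U$, and the paper's argument at such points is not a Wiener/Caccioppoli estimate at all. Instead (Lemma~\ref{lem-fine-cont-supermin}) one exploits the continuity of $f$ to find, for any $c>u_*(z)$ close enough to $u_*(z)$, a ball $B\ni z$ on which $f_*\ge c$ on $B\cap\bdyp U$. Then the pasting lemma glues $\min\{u,c\}$ on $B\cap U$ with the constant $c$ on $B\setminus U$ to produce a superminimizer on the \emph{open} ball $B$; fine continuity of superharmonic functions on open sets (which \emph{is} known) then forces $u_*(z)=u^*(z)$. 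The same trick applied to $-u$ handles the other inequality, and Theorem~\ref{thm-fine-cont-if-Cp-lim} packages this into the form used to deduce Theorem~\ref{thm-finecont-dir-intro}. Your proposed cut-off/comparison argument with ``non-thinness of $U$ at $x_0$'' is the wrong mechanism: what matters is that the complement is thin (so one can extend across to an open ball), not that $U$ is thick.
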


In the linear axiomatic setting, i.e.\ for $p=2$, finely (super)harmonic
functions and the Dirichlet problem on finely open sets
have been rigorously investigated,
see the monographs~\cite{Fug} and~\cite{LuMaZa}.
As pointed out in~\cite[p.~389]{LuMaZa}, even in the linear
setting the fine boundary can be too small for a fruitful theory
of the Dirichlet problem. 
Thus the use of the metric boundary in Theorem~\ref{thm-finecont-dir-intro}
is perhaps less unnatural than it may at first seem.

Obviously, some of the linear tools used in~\cite{Fug} and~\cite{LuMaZa}
are not available to us, nor in the nonlinear setting of unweighted
$\R^n$ and $p\ne2$.
Already the notion of fine \p-(super)harmonic functions is not straightforward,
and it is an open question whether \p-(super)minimizers  on finely open
sets have finely continuous representatives.
There are other open problems concerning important
properties of such functions,
see Section~\ref{sect-open-prob} for further discussion.

In metric spaces, there is
(in general) no equation to work with (such as the \p-Laplace equation).
Therefore our theory relies on \p-fine (super)minimizers
defined through \p-energy integrals and
upper gradients. This
makes our approach essentially
independent of the theory in
Kilpel\"ainen--Mal\'y~\cite{KiMa92} and Latvala~\cite{LatPhD}, \cite{Lat00},
even though our main result was inspired by the proof of Theorem~5.3 in~\cite{KiMa92}.
The key arguments in both proofs rely on pasting lemmas and the fine continuity of
\p-superharmonic functions on open sets.

Finely open sets and fine topology are closely related to quasiopen
sets and quasitopology, as shown by Fuglede~\cite{Fugl71}.
A similar study on metric spaces is more recent, but the metric space
approach seems suitable since it makes it easy
to consider the Sobolev type spaces $\Np$
on nonopen sets, such as finely open and quasiopen sets.
These Newtonian spaces were shown in \cite{BBLat3} and \cite{BBMaly} to
coincide with the Sobolev spaces developed on quasiopen and finely open sets
in $\R^n$ by Kilpel\"ainen--Mal\'y~\cite{KiMa92}.
Moreover, functions in the spaces $\Np$ are automatically
quasicontinuous, and consequently finely continuous outside a set of
zero capacity, both on open and quasiopen sets, see \cite{BBLat3},
\cite{BBMaly}, \cite{BBS5}, \cite{JB-pfine} and \cite{korte08}.
Several of these results play a crucial role in this paper.
On unweighted $\R^n$ and for nonlinear fine potential theory,
they can be found
in the monograph by Mal\'y--Ziemer~\cite{MZ}. See also
Heinonen--Kilpel\"ainen--Martio~\cite{HeKiMa}
for many of these results on weighted $\R^n$,
as well as \cite{BBfusco}, \cite{BBMaly}, \cite{Lahti17}--\cite{Lahti20}
for further results.
  
Obstacle problems, and thereby (super)minimizers, on nonopen sets in metric spaces
were studied in \cite{BBnonopen} and  it was shown therein (\cite[Theorem~7.3]{BBnonopen})
that the theory of obstacle problems is not natural beyond
finely open (or quasiopen) sets. In Proposition~\ref{prop:beyond},
we show that this true also for the theory of \p-fine (super)minimizers.
Additional fine properties of (super)harmonic functions on open sets
were derived in \cite{BBLat1} and~\cite{BBLat2}.

In addition to fine potential theory,
quasiopen sets appear naturally as minimizing sets in shape
optimization problems, see e.g.\ Bucur--Buttazzo--Velichkov~\cite{BBV}, Buttazzo--Dal Maso~\cite{buttazzo-dalMaso},
Buttazzo--Shrivastava~\cite[Examples~4.3 and~4.4]{buttazzo-shr}
and Fusco--Mukherjee--Zhang~\cite{FuscoMZ}.
Their importance lies in the fact that
they are sub- and superlevel sets of Sobolev functions,
see Theorem~\ref{thm-quasiopen-char}.

The outline of the paper is as follows:
In Section~\ref{sect-prelim},
we recall some definitions from first-order analysis on metric spaces,
while the fine topology is introduced
in Section~\ref{sect-fine-top}.
Therein, we also give two new characterizations of
quasiopen sets, which are probably known to the experts in the field.

In order to be able to study \p-fine (super)minimizers and the Dirichlet problem
on quasiopen sets $U$,
we need the appropriate local Newtonian (Sobolev) space
$\Npploc(U)$.
We study this space in Section~\ref{sect-fine-strict},
where we also establish a
density result that plays a crucial role in later sections.
In Sections~\ref{sect-fine-min} and~\ref{sect-obst},
we develop the basic theory of
\p-fine (super)minimizers, obstacle and Dirichlet problems on quasiopen sets.

Finally, in Section~\ref{sect-fine-cont},
we are ready to develop
the necessary framework
enabling us to obtain Theorem~\ref{thm-finecont-dir-intro}.
We also deduce corresponding pointwise results.
In Section~\ref{sect-remove},
we use some of our results to
give some more information
on fine Newtonian spaces.
The final Section~\ref{sect-open-prob} is devoted
to open problems.

\begin{ack}
A.~B. and J.~B. were supported by the Swedish Research Council,
grants 2016-03424 and 2020-04011 resp.\ 621-2014-3974 and 2018-04106.
Part of this research was done during
several visits of V.~L. to Link\"oping University.
\end{ack}

\section{Notation and preliminaries}
\label{sect-prelim}

\emph{We assume throughout the paper
that $X=(X,d,\mu)$ is a metric space equipped
with a metric $d$ and a positive complete  Borel  measure $\mu$
such that $0<\mu(B)<\infty$ for all balls $B \subset X$.
We also assume that $1<p< \infty$.}

\medskip

In this section, we introduce
the necessary metric space concepts used in this paper.
For brevity, we refer to
Bj\"orn--Bj\"orn--Latvala~\cite{BBLat1}, \cite{BBLat2} for more
extensive introductions, and references to the literature.
See also the monographs Bj\"orn--Bj\"orn~\cite{BBbook} and
Heinonen--Koskela--Shanmugalingam--Tyson~\cite{HKST},
where the theory is thoroughly  developed with proofs.

The measure  $\mu$  is \emph{doubling} if
there exists $C>0$ such that for all balls
$B=B(x_0,r):=\{x\in X: d(x,x_0)<r\}$ in~$X$,
we have
$        0 < \mu(2B) \le C \mu(B) < \infty$,
where $\lambda B=B(x_0,\lambda r)$.
In this paper, all balls are open.

A \emph{curve} is a continuous mapping from an interval,
and a \emph{rectifiable} curve is a curve with finite length.
We will only consider curves which are nonconstant, compact and rectifiable.
A curve can thus be parameterized by its arc length $ds$.
A property holds for \emph{\p-almost every curve}
if the curve family $\Ga$ for which it fails has zero \p-modulus,
i.e.\ there is $\rho\in L^p(X)$ such that
$\int_\ga \rho\,ds=\infty$ for every $\ga\in\Ga$.

\begin{deff} \label{deff-ug}
A Borel function $g:X \to [0,\infty]$ is a \p-weak \emph{upper gradient}
of $f:X \to \eR:=[-\infty,\infty]$
if for \p-almost all curves
$\gamma: [0,l_{\gamma}] \to X$,
\begin{equation} \label{ug-cond}
        |f(\gamma(0)) - f(\gamma(l_{\gamma}))| \le \int_{\gamma} g\,ds,
\end{equation}
where the left-hand side is $\infty$
whenever at least one of the
terms therein is infinite.
\end{deff}

If $f$ has a \p-weak upper gradient in $\Lploc(X)$, then
it has a \emph{minimal \p-weak upper gradient}
$g_f \in \Lploc(X)$
in the sense that
$g_f \le g$ a.e.\
for every \p-weak upper gradient $g \in \Lploc(X)$ of $f$.

\begin{deff} \label{deff-Np}
Let for measurable $f$,
\[
        \|f\|_{\Np(X)} = \biggl( \int_X |f|^p \, d\mu
                + \inf_g  \int_X g^p \, d\mu \biggr)^{1/p},
\]
where the infimum is taken over all \p-weak upper gradients of $f$.
The \emph{Newtonian space} on $X$ is
\[
        \Np (X) = \{f: \|f\|_{\Np(X)} <\infty \}.
\]
\end{deff}
\medskip

The space $\Np(X)/{\sim}$, where  $f \sim h$ if and only if $\|f-h\|_{\Np(X)}=0$,
is a Banach space and a lattice.
In this paper we assume that functions in $\Np(X)$
 are defined everywhere (with values in $\eR$),
not just up to an equivalence class in the corresponding function space.

For a measurable set $E\subset X$, the Newtonian space $\Np(E)$ is defined by
considering $(E,d|_E,\mu|_E)$ as a metric space in its own right.
We say  that $f \in \Nploc(E)$ if
for every $x \in E$ there exists a ball $B_x\ni x$ such that
$f \in \Np(B_x \cap E)$.
If $f,h \in \Nploc(X)$,
then $g_f=g_h$ a.e.\ in $\{x \in X : f(x)=h(x)\}$,
in particular $g_{\min\{f,c\}}=g_f \chi_{\{f < c\}}$ a.e.\ in $X$
for $c \in \R$.

The  \emph{Sobolev capacity} of an arbitrary set $E\subset X$ is
\[
\Cp(E) = \CpX(E)=\inf_{f}\|f\|_{\Np(X)}^p,
\]
where the infimum is taken over all $f \in \Np(X)$ such that
$f\geq 1$ on $E$.
A property holds \emph{quasieverywhere} (q.e.)\
if the set of points  for which it fails
has capacity zero.
The capacity is the correct gauge
for distinguishing between two Newtonian functions.
If $f \in \Np(X)$, then $h \sim f$ if and only if $h=f$ q.e.
Moreover, if $f,h \in \Np(X)$ and $f= h$ a.e., then $f=h$ q.e.

For $A \subset U \subset X$, where $U$ is assumed to be measurable, we let
\[
  \Np_0(A,U)=\{f|_{A} : f \in \Np(U) \text{ and }
  f=0 \text{ on } U \setm A\}.
\]
If $U=X$, we write $\Np_0(A)=\Np_0(A,X)$.
Functions from $\Np_0(A,U)$ can be extended by zero in $U\setm A$ and we
will regard them in that sense if needed.

If $E \subset A$ are bounded subsets of $X$, then
the \emph{variational capacity} of $E$ with respect to $A$ is
\[
\cp(E,A) = \inf_{f}\int_{X} g_{f}^p\, d\mu,
\]
where the infimum is taken over all $f \in \Np_0(A)$
such that $f\geq 1$ on $E$.
If no such function $f$ exists then $\cp(E,A)=\infty$.

\begin{deff} \label{def-PI}
$X$ supports a \emph{\p-Poincar\'e inequality} if
there exist constants $C>0$ and $\lambda \ge 1$
such that for all balls $B \subset X$,
all integrable functions $f$ on $X$ and all \p-weak upper gradients $g$ of $f$,
\begin{equation} \label{PI-ineq}
        \vint_{B} |f-f_B| \,d\mu
        \le C \diam(B) \biggl( \vint_{\lambda B} g^{p} \,d\mu \biggr)^{1/p},
\end{equation}
where $ f_B
 :=\vint_B f \,d\mu
:= \int_B f\, d\mu/\mu(B)$.
\end{deff}

In $\R^n$ equipped with a doubling measure $d\mu=w\,dx$, where
$dx$ denotes Lebesgue measure, the \p-Poincar\'e inequality~\eqref{PI-ineq}
is equivalent to the \emph{\p-admissibility} of the weight $w$ in the
sense of Heinonen--Kilpel\"ainen--Martio~\cite{HeKiMa}, see
Corollary~20.9 in~\cite{HeKiMa}
and Proposition~A.17 in~\cite{BBbook}.
Moreover, in this case $g_u=|\nabla u|$ a.e.\ if $u \in \Np(\R^n)$.

As usual, we will write $f=f_\limplus-f_\limminus$,
where $f_\limpm=\max\{\pm f,0\}$.

\section{Fine topology and Newtonian functions on finely open sets}
\label{sect-fine-top}

\emph{Throughout the rest of the paper, we assume
that $X$ is complete and  supports a \p-Poincar\'e inequality, that
$\mu$ is doubling, and that $1<p< \infty$.}

\medskip

To avoid pathological situations we also assume that $X$
contains at least two points. In this section we recall the basic facts about the fine topology
associated with Newtonian functions.

\begin{deff}\label{deff-thinness}
A set $E\subset X$ is  \emph{thin} at $x\in X$ if
\begin{equation*} 
\int_0^1\biggl(\frac{\cp(E\cap B(x,r),B(x,2r))}{\cp(B(x,r),B(x,2r))}\biggr)^{1/(p-1)}
     \frac{dr}{r}<\infty.
\end{equation*}
A set $V\subset X$ is \emph{finely open} if
$X\setminus V$ is thin at each point $x\in V$.
\end{deff}

In the definition of thinness,
we make the convention that the integrand
is 1 whenever $\cp(B(x,r),B(x,2r))=0$.
It is easy to see that the finely open sets give rise to a
topology, which is called the \emph{fine topology}.
Every open set is finely open, but the converse is not true in general.
A function $u : V \to \eR$, defined on a finely open set $V$, is
\emph{finely continuous} if it is continuous when $V$ is equipped with the
fine topology and $\eR$ with the usual topology.
See Bj\"orn--Bj\"orn~\cite[Section~11.6]{BBbook} and
Bj\"orn--Bj\"orn--Latvala~\cite{BBLat1}
for further discussion on thinness and the fine topology
in metric spaces.
The
fine interior, fine boundary and fine closure of $E$
are denoted $\fineint E$, $\bdyp E$ and $\clEp$, respectively.

The following characterization of the fine boundary
is from Corollary~7.8 in  Bj\"orn--Bj\"orn~\cite{BBnonopen}.
We will mainly use it for finely open sets.

\begin{lem} \label{lem-bdyp}
Let $E\subset X$ be arbitrary. Then the fine boundary of $E$ is
\[
\bdyp E = \{x\in E: X\setm E \text{ is not thin at }x\}
     \cup \{x\in X\setm E: E \text{ is not thin at }x\}.
\]
\end{lem}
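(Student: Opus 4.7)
The plan is to reduce to the purely topological identity $\bdyp E = \clEp \cap \itoverline{X\setm E}^p$, which holds because the fine topology is a genuine topology, combined with a pointwise description of the fine closure of an \emph{arbitrary} set. I would then intersect the two descriptions and split cases on whether the point lies in $E$ or in $X\setm E$.

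First I would establish, for every $A\subset X$, the pointwise formula
\[
\itoverline{A}^p = A \cup \{x\in X\setm A: A \text{ is not thin at } x\}.
\]
The inclusion $\supset$ is immediate: if $x\in X\setm A$ belonged to $\fineint(X\setm A)$, then a finely open $V$ with $x\in V\subset X\setm A$ would have $X\setm V$ thin at $x$ by the definition of fine openness, and the monotonicity of $\cp(\,\cdot\,,B(x,2r))$ in the first argument would force the smaller set $A\subset X\setm V$ to be thin at $x$ as well.

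The opposite inclusion $\subset$ is where the main obstacle lies, namely in the nonlinear Cartan-type property: if $A$ is thin at a point $x\notin A$, then $x$ must have a fine neighbourhood disjoint from $A$. I would invoke this from the earlier papers in the series, in particular Bj\"orn--Bj\"orn--Latvala~\cite{BBLat1} and \cite[Section~11.6]{BBbook}, where it is proved by using a Wiener-type sum on dyadic annuli around $x$ to build a Newtonian test function equal to $1$ on $A$ near $x$ and vanishing at $x$, so that a sublevel set of its quasicontinuous representative produces the desired fine neighbourhood.

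With this formula applied to both $A=E$ and $A=X\setm E$, one gets
\[
\clEp = E \cup \{x\in X\setm E: E \text{ is not thin at } x\}
\]
and analogously for $\itoverline{X\setm E}^p$. Intersecting these via $\bdyp E = \clEp \cap \itoverline{X\setm E}^p$ and splitting into cases: a point $x\in E$ lies automatically in $\clEp$ and belongs to $\itoverline{X\setm E}^p$ iff $X\setm E$ is not thin at $x$; symmetrically, a point $x\in X\setm E$ lies automatically in $\itoverline{X\setm E}^p$ and belongs to $\clEp$ iff $E$ is not thin at $x$. The disjoint union of these two contributions is exactly the right-hand side of the claimed identity.
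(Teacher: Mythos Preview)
The paper does not give its own proof of this lemma; it simply records the statement and cites Corollary~7.8 in Bj\"orn--Bj\"orn~\cite{BBnonopen}. So there is no in-text argument to compare against.

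Your outline is correct and is in fact the standard route to this characterization. The only substantive step is the inclusion $\itoverline{A}^p \subset A \cup \{x\in X\setm A: A \text{ is not thin at } x\}$, for which you rightly identify the weak Cartan property from~\cite{BBLat1} (see also \cite[Section~11.6]{BBbook}) as the essential input: thinness of $A$ at $x\notin A$ must yield a genuine fine neighbourhood of $x$ missing $A$, and this does not follow from the bare definition of finely open sets. The rest---monotonicity of the Wiener integrand for the easy inclusion, and the case split $x\in E$ versus $x\in X\setm E$ after intersecting the two closures---is routine. One minor remark: you could phrase the easy inclusion slightly more directly by noting that $A\subset \itoverline{A}^p$ is automatic (closures contain the set), so only the points of $X\setm A$ need discussion on either side.
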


The following definition will also be important in this paper.

\begin{deff}   \label{deff-q-cont}
A set $U\subset X$ is \emph{quasiopen} if for every
$\varepsilon>0$ there is an open set $G\subset X$ such that $\Cp(G)<\varepsilon$
and $G\cup U$ is open.

A function $u$ defined on a set  $E \subset X$ is \emph{quasicontinuous}
if for every $\varepsilon>0$ there is an open set $G\subset X$
such that $\Cp(G)<\varepsilon$ and $u|_{E \setm G}$ is finite and
continuous.
\end{deff}

The quasiopen sets do not in general form a topology,
see Remark~9.1 in Bj\"orn--Bj\"orn~\cite{BBnonopen}.
However it follows easily from the countable subadditivity of $\Cp$
that countable unions and finite intersections of quasiopen sets are quasiopen.
Quasiopen sets have recently been characterized
in several ways. Here we summarize the known and some new characterizations.
Note in particular the close connection between quasiopen and finely
open sets.

\begin{thm} \label{thm-quasiopen-char}
Let $U\subset X$ be arbitrary.
Then the following conditions are equivalent\/\textup{:}
\begin{enumerate}
\renewcommand{\theenumi}{\textup{(\roman{enumi})}}%
\item \label{b-1}
$U$ is quasiopen\/\textup{;}
\item \label{b-2}
$U$ is a union of a finely open set and a set of capacity zero\/\textup{;}
\item \label{b-p-path}
$U$ is \p-path open,
i.e.\ $\ga^{-1}(U)$ is relatively open in $[0,l_\ga]$
for \p-almost every curve $\gamma: [0,l_{\gamma}] \to X$\/\textup{;}
\item \label{b-3}
$U=\{x:u(x)>0\}$ for some nonnegative quasicontinuous $u$ on
$X$\textup{;}
\item \label{b-4}
$U=\{x:u(x)>0\}$ for some nonnegative $u\in \Np(X)$.
\end{enumerate}
\end{thm}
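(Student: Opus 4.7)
The plan is to arrange the equivalences in the cycle (v)$\Rightarrow$(iv)$\Rightarrow$(i)$\Rightarrow$(iii)$\Rightarrow$(ii)$\Rightarrow$(v), leaning on two standing facts about Newtonian functions under our hypotheses: every $u\in\Np(X)$ is quasicontinuous (hence finely continuous outside a set of capacity zero), and every set of zero capacity has zero \p-modulus of curves meeting it.

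The two easy first steps are short. For (v)$\Rightarrow$(iv), the given $u\in\Np(X)$ is itself a nonnegative quasicontinuous witness. For (iv)$\Rightarrow$(i), fix $\eps>0$ and pick open $G$ with $\Cp(G)<\eps$ such that $u|_{X\setm G}$ is continuous; then $\{u>0\}\cap(X\setm G)=W\cap(X\setm G)$ for some open $W\subset X$, so $U\cup G=W\cup G$ is open. The step (i)$\Rightarrow$(iii) is by now classical: pick open $G_n$ with $\Cp(G_n)<2^{-n}$ and $U\cup G_n$ open; by compactness, \p-a.e.\ (compact) curve $\ga$ avoids the capacity-zero set $\bigcap_n G_n$ and hence some whole $G_{n_0}$, on which $\ga^{-1}(U)=\ga^{-1}(U\cup G_{n_0})$ is relatively open.

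For (iii)$\Rightarrow$(ii), the plan is to exploit the thinness characterization of fine topology. The \p-path openness of $U$ forces $X\setm U$ to be thin at \emph{quasievery} point of $U$: at any $x\in U$ where $X\setm U$ is not thin, a Wiener-type capacity-sum argument produces a family of curves through $x$, of positive \p-modulus, that re-enter $X\setm U$ arbitrarily close to $x$, contradicting \p-path openness. Consequently $U\setm\fineint U$ has capacity zero, giving (ii).

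The main obstacle is (ii)$\Rightarrow$(v): given $U=V\cup Z$ with $V$ finely open and $\Cp(Z)=0$, we must construct $u\in\Np(X)$ with $\{u>0\}$ equal to $U$ pointwise. The plan is to exploit, at each $x\in V$, the convergence of the Wiener integral defining thinness of $X\setm V$ at $x$ to produce a nonnegative local \emph{fine bump} $v_x\in\Np(X)$ positive precisely on a fine neighborhood of $x$ contained in $V$. A countable subcovering of $V$ and a weighted sum, the weights chosen so the series converges in $\Np(X)$, yield $f\in\Np(X)$ with $\{f>0\}$ equal to $V$ up to a capacity-zero discrepancy, which together with $Z$ is then absorbed by comparison with auxiliary Newtonian potentials concentrated on null sets. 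The delicate point is arranging equality $\{u>0\}=U$ everywhere pointwise rather than merely quasieverywhere, which relies on the freedom to redefine Newtonian functions on sets of zero capacity.
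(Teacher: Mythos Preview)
Your overall architecture matches the paper's: the implications (v)$\Rightarrow$(iv)$\Rightarrow$(i) are handled the same way, and your construction for (ii)$\Rightarrow$(v) via fine bump functions, a countable subfamily, and a weighted $\Np$-convergent series is exactly what the paper does (citing Lemma~3.3 and the quasi-Lindel\"of principle from~\cite{BBLat3}). One point you should make explicit: the fine topology is \emph{not} Lindel\"of, so ``a countable subcovering of $V$'' is not automatic --- you need the quasi-Lindel\"of principle, which only gives a countable subfamily covering $V$ up to a set of zero capacity. You acknowledge this discrepancy, but the mechanism deserves to be named. After that, the paper simply redefines $u$ to equal $1$ on the residual null set (legitimate since Newtonian functions may be altered on capacity-zero sets), which is cleaner than your ``auxiliary Newtonian potentials''.

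There is, however, a genuine error in your argument for (i)$\Rightarrow$(iii). You claim that if a compact curve $\gamma$ avoids the capacity-zero set $\bigcap_n G_n$, then by compactness it avoids some single $G_{n_0}$. This is false: take $\gamma=[0,1]$ and $G_n=(0,1/n)$; then $\bigcap_n G_n=\emptyset$ but $\gamma$ meets every $G_n$. Compactness does not help because the complements $X\setm G_n$ are closed, not open. The paper simply cites Shanmugalingam~\cite[Remark~3.5]{Sh-harm}; a correct direct argument requires choosing $u_n\in\Np(X)$ with $u_n\ge 1$ on $G_n$ and $\|u_n\|_{\Np}<2^{-n}$, and then using that for \p-a.e.\ curve each $u_n\circ\gamma$ is continuous while $u_n\to 0$ q.e., which forces the set where all $u_n\ge 1$ to have zero capacity.

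The more serious gap is (iii)$\Rightarrow$(ii). This is by far the deepest implication in the theorem --- in the paper it is obtained by combining Theorem~1.1 of Bj\"orn--Bj\"orn--Mal\'y~\cite{BBMaly} (\p-path open $\Rightarrow$ quasiopen) with Theorem~1.4(a) of~\cite{BBLat2} (quasiopen $\Rightarrow$ finely open up to capacity zero), and neither of these is short. Your sketch (``a Wiener-type capacity-sum argument produces a family of curves of positive \p-modulus that re-enter $X\setm U$ arbitrarily close to $x$'') describes an intuition, not a proof: producing curves of positive modulus from divergence of a Wiener integral requires substantial machinery (capacitary estimates, chaining, Poincar\'e). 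As written, this step is not established.
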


\begin{proof}
\ref{b-1} \eqv \ref{b-2}
This follows from Theorem~1.4\,(a) in Bj\"orn--Bj\"orn--Latvala~\cite{BBLat2}.

\ref{b-1} \imp \ref{b-p-path}
This follows from
Remark~3.5 in Shanmugalingam~\cite{Sh-harm}.

\ref{b-p-path} \imp \ref{b-1}
This follows from Theorem~1.1 in Bj\"orn--Bj\"orn--Mal\'y~\cite{BBMaly}.

\ref{b-4} \imp \ref{b-3}
By Theorem~1.1 in Bj\"orn--Bj\"orn--Shan\-mu\-ga\-lin\-gam~\cite{BBS5},
$u$ is quasicontinuous and thus \ref{b-3} holds.

\ref{b-3} \imp \ref{b-1}
This follows from Proposition~3.4 in \cite{BBMaly}.

\ref{b-2} \imp \ref{b-4}
Assume that $V\subset U$ is finely open and $\Cp(U\setminus V)=0$.
By
Lemma~3.3 in Bj\"orn--Bj\"orn--Latvala~\cite{BBLat3},
for each $x\in V$
we find $v_x\in \Np_0(V)$ such that $0\le v_x\le 1$, $v_x(x)=1$
and $v_x=0$ outside of $V$.
Since $v_x$ is quasicontinuous
(by Theorem~1.1 in~\cite{BBS5}),
$\{y:v_x(y)>0\}$ is
a quasiopen subset of $V$ (by the already proved \ref{b-3} \imp \ref{b-1}).
Therefore, by the quasi-Lindel\"of principle (Theorem~3.4 in~\cite{BBLat3}),
the family $\{v_x:x\in V\}$ contains a
countable subfamily $\{v_j\}_{j=1}^\infty$
such that $\Cp(Z)=0$ for the set
\[
Z:=\{x\in V: v_j(x)=0 \textrm{ for all } j\}.
\]
Let
\[
v=\sum_{j=1}^\infty \frac{2^{-j}v_j}{1+\|v_j\|_{\Np(X)}} \in \Np(X)
\quad \text{and} \quad
 u=\begin{cases}
  1 & \textrm{on }  Z\cup(U\setminus V),\\
  v & \textrm{elsewhere}.
\end{cases}
\]
Since $u=v$ q.e., also $u \in \Np(X)$. Moreover $U=\{x:u(x)>0\}$.
\end{proof}

Quasiopen, and thus finely open, sets are measurable.
If $U$ is finely open and $\Cp(E)=0$, then $U \setm E$ is
finely open, from which it follows that fine limits do not see sets of capacity zero.

For any measurable set $E\subset X$ the notion of q.e.\ in $E$
can either be taken with respect to the global capacity $\Cp$ on $X$
or with respect to the capacity $\CpE$ determined
by $E$ as the underlying space. However, for a quasiopen set $U$,
the capacities $\Cp$ and $\CpU$ have the same zero sets,
and $\Cp$-quasicontinuity in $U$ is equivalent to
$\CpU$-quasicontinuity, by Propositions~3.4 and~4.2 in
\cite{BBMaly}.

Here we collect some  facts on quasicontinuity from
\cite[Theorem~4.4]{BBLat3},
\cite[Theorem~1.4]{BBLat2} and
\cite[Theorem~1.3]{BBMaly}.
For further characterizations of quasiopen sets and quasicontinuous functions
see~\cite{BBMaly} and also
Theorem~\ref{thm-Np0} below.

\begin{thm} \label{thm-newton-quasicont}
Let $U$ be quasiopen. Then the following are true\/\textup{:}
\begin{enumerate}
\item \label{k-1}
Functions in $\Np(U)$ are quasicontinuous.
\item \label{k-3}
A function $u:U \to \eR$  is quasicontinuous if and only if
it is finite q.e.\ and finely continuous q.e.
\end{enumerate}
\end{thm}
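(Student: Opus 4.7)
My plan for~\ref{k-1} is to reduce to the standard quasicontinuity of functions in $\Np(X)$ on $X$. Using Theorem~\ref{thm-quasiopen-char}\ref{b-4}, write $U=\{w>0\}$ for some nonnegative $w\in\Np(X)$. For $u\in\Np(U)$ and each $k\ge 1$, I would build a cutoff $\phi_k\in\Np_0(U,X)$ with $0\le\phi_k\le 1$, supported in $U$ and equal to $1$ on a subset of $U$ whose complement in $U$ has capacity at most $2^{-k}$, proceeding essentially as in the proof of \ref{b-2}\imp\ref{b-4} above via Lemma~3.3 of~\cite{BBLat3}. The truncated products $u_m\phi_k$, where $u_m=\max(\min(u,m),-m)$, then extend by zero to functions in $\Np(X)$ and are therefore quasicontinuous on $X$. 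A diagonal argument across $k$ and $m$ packages these into one exceptional open set $G$ of small capacity outside which $u$ itself is continuous on $U$.

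For the forward direction of~\ref{k-3}, suppose $u$ is quasicontinuous on $U$. Pick open sets $G_n\subset X$ with $\Cp(G_n)<2^{-n}$ such that $u|_{U\setm G_n}$ is finite and continuous, and set $H_n=\bigcup_{k\ge n}G_k$ and $E=\bigcap_n H_n$. Subadditivity of $\Cp$ gives $\Cp(E)=0$, so $u$ is finite q.e. For $x\in U\setm E$, fix $n$ so large that $x\notin H_n$; a Wiener-type estimate comparing $\cp(H_n\cap B(x,r),B(x,2r))$ with $\Cp(H_n)$ shows that $H_n$ is thin at~$x$, so that $U\setm H_n$ is a fine neighbourhood of $x$. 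Since $u$ is continuous at $x$ along $U\setm H_n$, it is finely continuous at~$x$.

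For the converse in~\ref{k-3}, Theorem~\ref{thm-quasiopen-char}\ref{b-1}\eqv\ref{b-2} lets me assume $u$ is finely continuous and finite on a finely open $V\subset U$ with $\Cp(U\setm V)=0$. Given $\varepsilon>0$, I would produce an open $G\supset U\setm V$ with $\Cp(G)<\varepsilon$ outside which $u$ is continuous on $U$. Covering $V$ by the countable family of finely open sets $\{u\in I_j\}$, with $I_j$ ranging over dyadic intervals of length $2^{-j}$, the quasi-Lindel\"of principle (Theorem~3.4 in~\cite{BBLat3}) together with Lemma~3.3 of~\cite{BBLat3} allow me to dress each cell by a small-capacity open set controlling the local oscillation of $u$; summing the capacity budgets geometrically yields one such~$G$.

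The main obstacle is the converse in~\ref{k-3}: passing from the purely topological notion of fine continuity to metric continuity outside a small open set needs quantitative capacity control of the discrepancy between the fine and metric topologies. This is precisely where the doubling hypothesis and the \p-Poincar\'e inequality are indispensable, since through Wiener-type estimates they ensure that thin sets at a point can be approximated by open sets of controllable capacity, and vice versa.
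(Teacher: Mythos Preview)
The paper does not prove this theorem at all: the sentence preceding the statement reads ``Here we collect some facts on quasicontinuity from \cite[Theorem~4.4]{BBLat3}, \cite[Theorem~1.4]{BBLat2} and \cite[Theorem~1.3]{BBMaly},'' and no proof is given. So there is nothing in the paper to compare your argument against; you are attempting to reprove results that the authors import wholesale from their earlier work.

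That said, your proposal has a genuine gap in the forward direction of~\ref{k-3}. You fix $x\in U\setm E$, choose $n$ with $x\notin H_n$, and then assert that ``a Wiener-type estimate comparing $\cp(H_n\cap B(x,r),B(x,2r))$ with $\Cp(H_n)$ shows that $H_n$ is thin at~$x$.'' This is false in general: a set of arbitrarily small Sobolev capacity can fail to be thin at points on its (metric) boundary. For instance, in unweighted $\R^n$ with $p<n$, a ball $B(0,\eps)$ has $\Cp$ comparable to $\eps^{n-p}$, yet at any boundary point the Wiener integrand is bounded below by a positive constant for all $r<\eps$, so the integral diverges. Since $H_n$ is merely open and $x\notin H_n$, nothing prevents $x$ from lying on $\bdy H_n$. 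The global smallness of $\Cp(H_n)$ gives no pointwise decay of $\cp(H_n\cap B(x,r),B(x,2r))$ as $r\to0$. The actual proofs in \cite{BBLat2} and \cite{BBMaly} go through the equivalence of quasiopen sets with finely open sets up to capacity zero (Theorem~\ref{thm-quasiopen-char}\ref{b-1}\eqv\ref{b-2}), which in turn rests on the weak Cartan and Choquet--Kellogg properties, not on a direct Wiener estimate from small global capacity.

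Your outlines for~\ref{k-1} and for the converse in~\ref{k-3} are plausible in spirit and closer to the arguments in \cite{BBLat3}, though still quite sketchy (for~\ref{k-1} you need $u_m\phi_k\in\Np(X)$ via the Leibniz rule, and the diagonal argument needs the exceptional sets to be nested or summable in capacity; for the converse you need the step from ``finely open'' to ``open up to small capacity,'' which is again the nontrivial input from~\cite{BBLat2}).
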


\section{\texorpdfstring{$\Npploc(U)$}{N1,p,fine-loc(U)} and
  \texorpdfstring{\p}{p}-strict subsets}
\label{sect-fine-strict}

\emph{From now on we always assume that $U$ is a nonempty
quasiopen set.}

\medskip

In the next section, we will start developing the basic theory of fine superminimizers.
For this purpose, we first need to define appropriate fine Sobolev spaces.
Here \p-strict subsets will play a key role, as a substitute for relatively compact subsets.

Recall that $V\Subset U$ if $\overline{V}$ is a compact subset of $U$.

\begin{deff}\label{def-fineloc}
  A set $A \subset E$  is a \emph{\p-strict subset} of $E$ if
there is a function $\eta \in \Np_0(E)$ such that $\eta =1$ on
$A$.

  A function $u$ belongs to $\Npploc(U)$ if $u \in \Np(V)$
for all finely open \p-strict subsets $V \Subset U$.
\end{deff}

Equivalently, in the definition of \p-strict subsets
  it can in addition be required
that $0 \le \eta\le 1$,
as in Kilpel\"ainen--Mal\'y~\cite{KiMa92}.
Note that $A \subset E$ is a \p-strict subset of $E$ if and
  only if $\cp(A,E)<\infty$.
  If $V$ is finely open, then
by Lemma~3.3 in Bj\"orn--Bj\"orn--Latvala~\cite{BBLat3},
$V$ has a base of fine neighbourhoods consisting only of
  \p-strict subsets of $V$.
  We recall that functions in $\Npploc(U)$ are
finite q.e., finely continuous q.e.\ and
quasicontinuous, by Theorem~4.4 in~\cite{BBLat3}.

Throughout the paper, we consider minimal
\p-weak upper gradients in $U$.
The following fact is then convenient: If $u\in \Nploc(X)$ then
the minimal \p-weak upper gradients $g_{u,U}$ and $g_u$ with respect to
$U$ and $X$, respectively, coincide a.e.\ in $U$, see
Bj\"orn--Bj\"orn~\cite[Corollary~3.7]{BBnonopen} or
\cite[Lemma~4.3]{BBLat3}.
For this reason we drop $U$ from the notation and simply write $g_u$.

\begin{remark} \label{rmk-Npfineloc-def}
By Proposition~1.48 in~\cite{BBbook},
\p-weak upper gradients do not see
sets of capacity zero.
Thus it follows from  Theorem~\ref{thm-quasiopen-char}
that $u \in \Npploc(U)$
if and only if
$u \in \Np(V)$ for every quasiopen \p-strict subset
$V \Subset U$.
\end{remark}

Recall that the local space $\Nploc(U)$ was introduced in
Section~\ref{sect-prelim}. It is not hard to see that $\Nploc(U) \subset \Npploc(U)$.
However, we do not
 know if the equality $\Nploc(U)=\Npploc(U)$ holds for all quasiopen sets $U$.
 For the reader's convenience, we recall
in Lemma~\ref{lem-open-newton-equal} why these spaces coincide for open sets.

\begin{remark} The reason for the compact inclusion $V \Subset U$ in Definition~\ref{def-fineloc}
is that if it was replaced by
$V \subset U$, the inclusion $\Nploc(U) \subset \Npploc(U)$ might fail.
Neither would Lemma~\ref{lem-open-newton-equal} and Corollary~\ref{cor-fine-min} below hold.

To see this let $U=B(0,2) \setm \{0\} \subset \R^n$, with $1 < p < n$,
in which case it is easy to see that $V=B(0,1) \setm \{0\}$ is an open \p-strict
subset of $U$, but $V \not\Subset U$.
At the same time $u(x)=|x|^{(p-n)/(p-1)} \in \Nploc(U)$ but $u \notin \Np(V)$.
Since $u$ is \p-harmonic in $U$ also Corollary~\ref{cor-fine-min} would fail.
\end{remark}

\begin{lem}\label{lem-open-newton-equal}
For open $G\subset X$ we have $\Nploc(G)=\Npploc(G)$.
\end{lem}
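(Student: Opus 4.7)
The inclusion $\Nploc(G)\subset\Npploc(G)$ is already noted before the lemma, so the task is to prove the reverse inclusion $\Npploc(G)\subset\Nploc(G)$. My plan is to show that, thanks to $G$ being open, every point $x\in G$ admits a ball which is simultaneously open (hence finely open), a \p-strict subset of $G$, and compactly contained in $G$. Once this is established, the defining property of $\Npploc(G)$ immediately yields membership in $\Np$ on such a ball, which is exactly what is needed for $\Nploc(G)$.

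Fix $u\in\Npploc(G)$ and $x\in G$. Since $X$ is complete and $\mu$ is doubling, $X$ is proper, so closed bounded sets are compact. Using openness of $G$, choose $r>0$ so small that $\itoverline{B(x,2r)}\subset G$, and set $B=B(x,r)$. Then $\itoverline{B}$ is compact and contained in $G$, which gives $B\Subset G$, and $B$ is open hence finely open. It remains to verify that $B$ is a \p-strict subset of $G$.

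For this I would use the explicit Lipschitz cutoff
\[
\eta(y)=\min\bigl\{1,\max\{0,2-d(y,x)/r\}\bigr\}, \qquad y\in X.
\]
This $\eta$ is $1$ on $B$, vanishes outside $B(x,2r)$, is globally Lipschitz and has bounded support, so $\eta\in\Np(X)$ with $\eta=0$ on $X\setm G$. Hence $\eta\in\Np_0(G)$, showing that $B$ is a \p-strict subset of $G$ in the sense of Definition~\ref{def-fineloc}.

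Since $B$ is now a finely open \p-strict subset of $G$ with $B\Subset G$, the assumption $u\in\Npploc(G)$ gives $u\in\Np(B)=\Np(B\cap G)$, where the identity holds because $B\subset G$. As $x\in G$ was arbitrary, this produces the local ball required in the definition of $\Nploc(G)$, completing the proof. There is no real obstacle here; the only point to be careful about is making sure the cutoff lives in $\Np_0(G)$ (which is why we pick the double-radius ball inside $G$) and that $\itoverline{B}$ is compact, for which properness of $X$ is invoked.
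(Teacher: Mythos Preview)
Your proof is correct and, for the inclusion $\Npploc(G)\subset\Nploc(G)$, follows exactly the paper's approach: around each point produce a ball that is finely open, \p-strict in $G$, and compactly contained in $G$; the paper simply calls the \p-strictness ``straightforward'' where you write down the cutoff explicitly. The only difference is that the paper's proof also spells out the forward inclusion $\Nploc(G)\subset\Npploc(G)$ via a finite-cover compactness argument on $\overline{V}$, whereas you invoke the remark preceding the lemma; since that remark does assert the inclusion, your choice is justified.
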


\begin{proof}
First, let $f \in \Nploc(G)$ and let $V \Subset G$ be a
finely open \p-strict subset. For $x \in \overline{V}$ there is $r_x>0$ such that $f \in \Np(B(x,r_x))$.
Since $\overline{V}$ is compact there is a finite subcover
$\{B(x_j,r_{x_j})\}_{j=1}^m$ such that
$\overline{V} \subset \bigcup_{j=1}^m B(x_j,r_{x_j})$.
It follows that $\|f\|_{\Np(\overline{V})}^p \le \sum_{j=1}^m
      \|f\|_{\Np(B(x_j,r_{x_j}))}^p < \infty$, and thus
     $f \in \Np(V)$. Hence $f\in\Npploc(G)$.

Conversely, assume that that $f \in \Npploc(G)$ and $x \in G$.
Then there is $r_x$ such that $B(x,r_x) \Subset G$.
It is straightforward to see that $B(x,r_x)$ is a \p-strict
subset of $G$, and thus
$f \in \Np(B(x,r_x))$.
Hence $f \in \Nploc(G)$.
\end{proof}

The following density result will play a crucial role.

\begin{prop} \label{prop-density}
Let $E \subset X$ be an arbitrary set  and $0\le u \in \Np_0(E)$.
Then there exist  finely open \p-strict subsets
$V_j \Subset E$
and bounded functions
$u_j \in \Np_0(V_j)$ such that
\begin{enumerate}
  \item
$V_j \subset V_{j+1}$ and $0 \le u_j \le u_{j+1} \le u$
    for $j=1,2,\ldots$\,\textup{;}
\item
  $\|u-u_j\|_{\Np(X)} \to 0$ and $u_j(x) \to u(x)$
  for q.e.\ $x \in X$, as $j \to \infty$.
\end{enumerate}
We may also require that $u_j \equiv 0$ outside $V_j$.
\end{prop}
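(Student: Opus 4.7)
The plan is to combine a double truncation of $u$---one cutting off small values, one restricting to a bounded region---with the decomposition of a quasiopen set into a finely open part plus a capacity-zero exceptional set supplied by Theorem~\ref{thm-quasiopen-char}\,\ref{b-2}. Since $u\in\Np_0(E)$, I regard $u$ as extended to $X$ by $u\equiv 0$ on $X\setm E$, obtaining $0\le u\in\Np(X)$; this affects none of the pointwise, q.e., or $\Np$-norm assertions to be proved.

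Fix $x_0\in X$ and let $\phi_j$ be the $1$-Lipschitz cutoffs
\[
\phi_j(x)=\max\{0,\min\{1,j+1-d(x,x_0)\}\},
\]
so $\phi_j\equiv 1$ on $B(x_0,j)$ and $\phi_j\equiv 0$ outside $B(x_0,j+1)$. Set
\[
\psi_j=\min\{(u-1/j)_\limplus,\,j\}\qquad\text{and}\qquad u_j=\psi_j\phi_j.
\]
Then $u_j$ is bounded, nonnegative, supported in the quasiopen set $W_j:=\{u>1/j\}\cap B(x_0,j+1)$, and the monotonicities $\psi_j\le\psi_{j+1}\le u$ and $\phi_j\le\phi_{j+1}\le 1$ yield $0\le u_j\le u_{j+1}\le u$. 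By Theorem~\ref{thm-quasiopen-char}\,\ref{b-2} write $W_j=V_j'\cup N_j$ with $V_j'$ finely open and $\Cp(N_j)=0$, and define the increasing sequence of finely open sets $V_j:=\bigcup_{k\le j}V_k'$. Since a Newtonian function may be modified on a set of capacity zero without changing its equivalence class, I redefine $u_j\equiv 0$ on $X\setm V_j$; then $u_j\in\Np_0(V_j)$ vanishes outside $V_j$.

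Part (a) is then built in. Each $V_j$ is a \p-strict subset of $E$ because $\eta_j:=\min\{ju,1\}\in\Np_0(E)$ (it vanishes on $X\setm E$ and belongs to $\Np(X)$ by the truncation property of upper gradients) and equals $1$ on $\{u\ge 1/j\}\supset V_j$. For (b), at every $x$ with $u(x)<\infty$ one has $\psi_j(x)\to u(x)$, and $\phi_j(x)=1$ once $j\ge d(x,x_0)+1$, so $u_j(x)\to u(x)$ q.e. The Sobolev convergence $\|u-u_j\|_{\Np(X)}\to 0$ follows from dominated convergence applied to $|u-u_j|^p\le u^p\in L^1$, together with the standard estimates $g_{\psi_j}\le g_u\chi_{\{u>1/j\}}$ and $g_{u_j}\le \phi_j g_{\psi_j}+\psi_j g_{\phi_j}\le g_u+u$, whence $g_{u-u_j}\to 0$ a.e.\ and is dominated by a function in $L^p(X)$.

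The step I expect to be the main obstacle is the metric-compact containment $\overline{V_j}\subset E$ built into the condition $V_j\Subset E$. Boundedness $V_j\subset B(x_0,j+1)$ together with the properness of $X$ (every complete doubling metric space is proper) makes $\overline{V_j}$ automatically compact, but $\overline{V_j}\subset E$ is delicate for arbitrary $E$ since the metric closure can strictly exceed the fine closure. I expect this to be handled by shrinking $V_j$ slightly within the same quasiopen $W_j$ and by absorbing the exceptional set where $u$ fails to be finely continuous into the capacity-zero part $N_j$ that is already being discarded, exploiting that $u$ is finely continuous q.e.\ by Theorem~\ref{thm-newton-quasicont}\,\ref{k-3}; this is where the bulk of the technical work lies.
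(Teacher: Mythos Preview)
Your construction is correct up to and including the verification that the $V_j$ are \p-strict subsets of $E$, that the $u_j$ are increasing and bounded by $u$, and that $\|u-u_j\|_{\Np(X)}\to 0$. The gap is precisely where you say it is: the metric compact containment $\overline{V_j}\subset E$. Your proposed fix, however, does not work. Fine continuity of $u$ (q.e.\ or even everywhere) controls the \emph{fine} closure of $\{u>1/j\}$, not its metric closure, and it is the metric closure that enters into $V_j\Subset E$. Concretely, if $E=U$ is finely open but not open, a level set $\{u>1/j\}$ can metrically accumulate at points of $\bdy U\setm U$ even though $u$ is finely continuous there; discarding a capacity-zero set cannot repair this, because the obstruction lives on a set of positive capacity in the metric boundary.

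The paper resolves this by exploiting quasicontinuity in a sharper way than merely invoking fine continuity q.e. It chooses open sets $G_j$ with $\Cp(G_j)<2^{-jp}$ such that $u|_{X\setm G_j}$ is \emph{metrically} continuous and $U\cup G_j$ is open, takes Newtonian functions $\psi_j$ with $\psi_j=1$ on $G_j$ and small norm, and sets $\phi_k=\min\bigl\{1,\sum_{j\ge k}\psi_j\bigr\}$, $v_j=(1-\phi_j)u$, $u_j=(v_j-2^{-j})_\limplus$. The point is that $u_j\equiv 0$ on the open set $G_j$, so $\supp u_j\subset\{x\in X\setm G_j:u(x)\ge 2^{-j}\}$, and the metric continuity of $u$ on the closed set $X\setm G_j$ then forces $\overline{\{u_j>0\}}\subset U$. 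This is the missing idea: one must kill $u_j$ on an \emph{open} neighbourhood of the bad set, not merely on a set of capacity zero, so that metric closure can be controlled by genuine continuity on a closed set. Your truncation $\psi_j\phi_j$ does the right thing in norm but lacks this mechanism.
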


\begin{proof}
Let $U=\fineint E$.
By Theorem~7.3 in Bj\"orn--Bj\"orn~\cite{BBnonopen}, $u \in \Np_0(U)$
and $u=0$ q.e.\ in $X \setm U$.
In the rest of the proof we therefore replace $E$ by $U$,
which is quasiopen by Theorem~\ref{thm-quasiopen-char}.
Modifying $u$ in a set of zero capacity, we can also assume that
  $u\equiv 0$ in $X\setm U$.

By truncating and multiplying by a constant and by a cutoff function,
we may assume that $0 \le u \le 1$ and that $u$ has bounded support,
see the proof of Lemma~5.43 in~\cite{BBbook}.
As $U$ is quasiopen
and $u$ is quasicontinuous on $X$
(by Theorem~\ref{thm-newton-quasicont}),
there are open sets
$G_j$ such that $\Cp(G_j) <2^{-jp}$, $U \cup G_j$ is open and
$u|_{X \setm G_j}$ is continuous, $j=1,2,\ldots$\,.
We can then also find $\psi_j \in \Np(X)$
such that $0 \le \psi_j \le 1$, $\|\psi_j\|_{\Np(X)} < 2^{-j}$
and $\psi_j=1$ in $G_j$.
Let $\phi_k=\min\bigl\{1,\sum_{j=k}^\infty \psi_j\bigr\}$.
Then $\|\phi_k\|_{\Np(X)} <2^{1-k}$ and the sequence $\{\phi_k\}_{k=1}^\infty$
is decreasing. By dominated convergence, $\phi_k(x) \to 0$ for a.e.~$x$.

Next, let
\[
  v_j=(1-\phi_j)u,
 \quad
u_j = (v_j- 2^{-j})_\limplus
  \quad \text{and} \quad
  W_j =\{x \in X : u_j(x) >0\}.
\]
As $\phi_j$ and $u$ are bounded it follows from
the Leibniz rule \cite[Theorem~2.15]{BBbook} that
$v_j \in \Np(X)$, and thus also $u_j \in \Np(X)$.
Hence, by Theorem~\ref{thm-quasiopen-char},
$W_j$ is quasiopen and there is
a set $E_j$ with zero capacity such
that $W_j \setm E_j$ is finely open.
Let $V_j=W_j \setm \bigcup_{i=1}^\infty E_i$.
Then $u_j \in \Np_0(V_j)$
and $u_j \le u$.

By the continuity of $u|_{X \setm G_j}$ and
since $u_j=0$ in the open set $G_j$, we see that
\[
\overline{V}_j\Subset
 \supp u_j=\overline{W}_j \subset \{x : u(x) \ge 2^{-j}\} \setm G_j \subset U.
\]
Note that $\supp u_j$ is bounded since $\supp u$ is bounded.
As $\{\phi_k\}_{k=1}^\infty$ is decreasing, $\{v_j\}_{j=1}^\infty$
is increasing.
Since $v_{j+1} \ge v_j > 2^{-j}$ in $W_j$, we see that
$u_{j+1} \ge 2^{-{j-1}}$ in $W_j \supset V_j$,
from which we conclude that $V_j$ is a \p-strict subset
of $U$ as well as of~$E$.

We next want to show that
\[
   u-u_j=(u-v_j) + (v_j-u_j)
   \to 0
   \quad \text{in } \Np(X).
\]
First,  $\|u-v_j\|_{L^p(X)} \to 0$
and $\|v_j-u_j\|_{L^p(X)}^p \le 2^{-jp} \mu(\supp u) \to 0$.
Next, we see that (using the Leibniz rule \cite[Theorem~2.15]{BBbook})
\begin{equation} \label{eq-u-v_j}
   g_{u-v_j} \le u g_{\phi_j} + \phi_j g_u
    \le  g_{\phi_j} + \phi_j g_u.
\end{equation}
Since $g_{\phi_j} \to 0$ in $L^p(X)$, $\phi_j \to 0$ a.e.,
and $g_u \in L^p(X)$,
the right-hand side in \eqref{eq-u-v_j} tends to $0$ in $L^p(X)$,
by dominated convergence.
Also
\[
   g_{v_j-u_j}
   \le ( g_u +  g_{\phi_j}) \chi_{\{0 < v_j < 2^{-j}\}}
   \le  g_u\chi_{\{0 < v_j < 2^{-j}\}} +  g_{\phi_j}
   \to 0 \quad \text{in } L^p(X),
\]
by dominated convergence since $\chi_{\{0 < v_j < 2^{-j}\}}(x) \to 0$
for a.e.\ $x$.
We thus conclude that $\|u-u_j\|_{\Np(X)} \to 0$ as $j \to \infty$.

By construction, $V_j \subset V_{j+1}$ and  $0 \le u_j \le u_{j+1} \le u$
for $j=1,2,\ldots$\,.
It then follows from Corollary~1.72 in~\cite{BBbook},
that $u_j(x) \to u(x)$ for q.e.\ $x \in
X$, as $j \to \infty$. After replacing $u_j$ by $u_j \chi_{V_j}$ one can
also require that $u_j \equiv 0$ on $X \setm V_j$.
\end{proof}

\section{Fine (super)minimizers}
\label{sect-fine-min}

\begin{deff}\label{def-finesuper}
A function $u \in \Npploc(U)$ is a
\emph{fine minimizer\/ \textup{(}resp.\ fine
superminimizer\/\textup{)}} in $U$ if
\begin{equation}   \label{eq-def-finesuper}
\int_{V} g_{u}^p \, d\mu
\le  \int_{V} g_{u+\phi}^p \, d\mu
\end{equation}
for every
finely open
\p-strict subset $V \Subset U$ and for every
(resp.\ every nonnegative) $\phi \in \Np_0(V)$.

Moreover, $u$ is a \emph{fine subminimizer}
if $-u$ is a fine superminimizer.
\end{deff}

By Remark~\ref{rmk-Npfineloc-def}, we may equivalently consider quasiopen \p-strict subsets $V \Subset U$ in Definition~\ref{def-finesuper}.

\begin{remark} \label{rmk-compare-finemin-U-fineint}
It follows from Proposition~\ref{prop:beyond} below
that if $u \in \Npploc(U)$ then $u$ is a
fine (super)minimizer in $U$ if and only if it is a fine
(super)minimizer in $\fineint U$.
On the other hand, this equivalence is not true if we drop
the assumption $u \in \Npploc(U)$ as seen
in Example~\ref{ex-open-plus-pt} below.
\end{remark}

For the reader's convenience, let us first
look at  the Euclidean case considered in Kilpel\"ainen--Mal\'y~\cite{KiMa92}.
By Remark~\ref{rmk-Npfineloc-def} and~\cite[Theorem~1.1]{BBLat3}
the spaces $\Np(U)$, $\Npploc(U)$ and $\Np_0(U)$
are equal (up to a.e.-equivalence) to
the spaces $W^{1,p}(U)$, $\Wploc(U)$ and $W^{1,p}_0(U)$ defined
for quasiopen subsets of (unweighted) $\R^n$ in~\cite{KiMa92}.
See also Theorem~\ref{thm-Np0} below
and~\cite[Theorem~2.10]{KiMa92}.
This is in particular true for open $U$, in which case
$\Np(U)$ also agrees with the Sobolev space $H^{1,p}(U)$ in
Heinonen--Kilpel\"ainen--Martio~\cite{HeKiMa}
(up to refined equivalence classes) also on weighted $\R^n$.

We next show that the fine supersolutions of~\cite{KiMa92} coincide with our fine superminimizers in $\R^n$.
Recall that, for any $v\in \Npploc(U)$, with $U \subset \R^n$ quasiopen,
we have
\begin{equation}   \label{eq-nabla=g}
|\nabla v|=g_v \quad \text{a.e.\ in }U,
\end{equation}
where $\nabla v$ is as defined in~\cite{KiMa92}; see \cite[Theorem~5.7]{BBLat3}.
The proof and the details above apply equally well if $\R^n$ is equipped
with a \p-admissible measure.

\begin{prop}\label{prop-fine-supersolution}
Let $U\subset \R^n$ be quasiopen and let $u\in \Npploc(U)$.
Then $u$ is a fine superminimizer in $U$ if and only if $u$ is a fine supersolution of
\begin{equation}\label{eq-fine-plaplace}
-\diverg(|\nabla u|^{p-2}\nabla u)=0
\end{equation}
in $U$ in the sense of
Kilpel\"ainen--Mal\'y\/~\textup{\cite[Section~3.1]{KiMa92}},
i.e.\
\begin{equation}      \label{eq-fine-supersol}
          \int_{V} |\nabla u|^{p-2} \nabla u \cdot \nabla \phi\,dx \ge 0
\end{equation}
for all \p-strict subsets $V \Subset U$ and all bounded nonnegative
$\phi\in\Np_0(V)$.
\end{prop}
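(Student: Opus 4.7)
The plan is to use the identity $|\nabla v|=g_v$ a.e.\ on $U$ from~\eqref{eq-nabla=g} to convert the metric $p$-energy $\int g_v^p\,dx$ into the classical Dirichlet energy $\int|\nabla v|^p\,dx$, and then use Proposition~\ref{prop-density} to bridge the two quantification schemes. The fine superminimizer condition quantifies over finely open $p$-strict subsets $V\Subset U$ with arbitrary nonnegative $\phi\in\Np_0(V)$, while the Kilpel\"ainen--Mal\'y condition quantifies over arbitrary $p$-strict subsets $V\Subset U$ with bounded nonnegative $\phi\in\Np_0(V)$; Proposition~\ref{prop-density} simultaneously replaces $V$ by a finely open $p$-strict subset and the test function by a bounded one.

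For the forward direction, suppose $u$ is a fine superminimizer and fix a $p$-strict subset $V\Subset U$ together with a bounded nonnegative $\phi\in\Np_0(V)$. Applying Proposition~\ref{prop-density} to $\phi$ with $E=V$ produces finely open $p$-strict subsets $V_j\Subset V$ and bounded $\phi_j\in\Np_0(V_j)$ with $\phi_j\equiv 0$ outside $V_j$ and $\phi_j\to\phi$ in $\Np(X)$. Each $V_j$ is also a $p$-strict subset of $U$, since any $\eta\in\Np_0(U)$ with $\eta=1$ on $V$ also equals $1$ on $V_j$. The fine superminimizer inequality applied on $V_j$ with test function $t\phi_j\ge 0$, combined with~\eqref{eq-nabla=g}, yields
\[
\int_{V_j}|\nabla u|^p\,dx\le\int_{V_j}|\nabla u+t\nabla\phi_j|^p\,dx\quad\text{for every }t\ge 0.
\]
Dividing by $t>0$ and letting $t\to 0^+$, with dominated convergence applied to the difference quotient (bounded by $C(|\nabla u|^{p-1}+|\nabla\phi_j|^{p-1})|\nabla\phi_j|$ for $t\in(0,1]$), gives $\int_{V_j}|\nabla u|^{p-2}\nabla u\cdot\nabla\phi_j\,dx\ge 0$. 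Since $\nabla\phi_j=0$ a.e.\ outside $V_j$, the integral equals the corresponding one over $V$. Finally, $\phi_j\to\phi$ in $\Np(X)$ forces $\nabla\phi_j\to\nabla\phi$ in $L^p(V)$ via~\eqref{eq-nabla=g}, so H\"older's inequality and $j\to\infty$ yield~\eqref{eq-fine-supersol}.

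For the reverse direction, assume~\eqref{eq-fine-supersol}, fix a finely open $p$-strict $V\Subset U$ and a nonnegative $\phi\in\Np_0(V)$, and construct $V_j$, $\phi_j$ as above. The pointwise convexity inequality $|a+b|^p\ge|a|^p+p|a|^{p-2}a\cdot b$ applied with $a=\nabla u$ and $b=\nabla\phi_j$, integrated over $V_j$ and combined with the hypothesis~\eqref{eq-fine-supersol} for $V_j$ and $\phi_j$, gives $\int_{V_j}|\nabla u|^p\,dx\le\int_{V_j}|\nabla(u+\phi_j)|^p\,dx$; since $\nabla\phi_j=0$ a.e.\ outside $V_j$, this extends to the integral over $V$. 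Letting $j\to\infty$ and rewriting both sides via~\eqref{eq-nabla=g} recovers $\int_V g_u^p\,d\mu\le\int_V g_{u+\phi}^p\,d\mu$, the fine superminimizer inequality. The main obstacle is precisely the mismatch of test classes between the two formulations, and Proposition~\ref{prop-density} is exactly the tool that resolves it by producing approximations that are simultaneously bounded, supported in a finely open $p$-strict subset, and $\Np$-convergent.
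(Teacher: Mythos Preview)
Your proof is correct. The core mechanism in each direction matches the paper's: going from superminimizer to supersolution you both differentiate the energy at $t=0$, and going from supersolution to superminimizer you use (a form of) convexity. The difference is in the approximation machinery. The paper never invokes Proposition~\ref{prop-density}: for supersolution $\Rightarrow$ superminimizer it applies H\"older directly to get
\[
\int_V |\nabla u|^p \le \int_V |\nabla u|^{p-2}\nabla u\cdot\nabla(u+\phi)
\le \|\nabla u\|_{L^p(V)}^{p-1}\|\nabla(u+\phi)\|_{L^p(V)},
\]
and handles unbounded $\phi$ by the simple truncation $\min\{\phi,k\}$; for the converse it tests with $\varepsilon\phi$ on the given $V$. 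Your use of Proposition~\ref{prop-density} is heavier but buys something: it explicitly bridges the mismatch between ``finely open $p$-strict $V$'' in Definition~\ref{def-finesuper} and ``arbitrary $p$-strict $V$'' in~\eqref{eq-fine-supersol}, a point the paper passes over (it is easily fixed by replacing $V$ with $\fineint V$ and using that $\phi=0$ q.e.\ on the difference). Both arguments tacitly use $|\nabla u|\in L^p(V)$ for all $p$-strict $V\Subset U$, which in this $\R^n$ setting follows from the identification $\Npploc(U)=W^{1,p}_{\mathrm{loc}}(U)$ discussed just before the proposition.
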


\begin{proof}
First, let $u$ be a fine supersolution of \eqref{eq-fine-plaplace} in
$U$ and
let $V\Subset U$ be a \p-strict subset of $U$. Let $\varphi\in
\Np_0(V)$, $\varphi\ge 0$.
Assuming also that $\varphi$ is bounded, we
obtain from \eqref{eq-fine-supersol} that
\begin{align*}
\int_{V}|\nabla u|^p\,dx&=\int_{V}|\nabla u|^{p-2}\nabla u\cdot \nabla u\,dx
\le \int_{V}|\nabla u|^{p-2}\nabla u\cdot \nabla (u+\varphi)\,dx\\
&\le \biggl(\int_{V}|\nabla u|^p\,dx\biggr)^{1-1/p}
   \biggl(\int_{V}|\nabla (u+\varphi)|^p\,dx\biggr)^{1/p}.
\end{align*}
Since $u \in \Np(V)$, the first integral on the right-hand side is finite,
and dividing by it shows that
\[
 \int_{V}|\nabla u|^p\,dx \le \int_{V}|\nabla (u+\varphi)|^p\,dx.
\]
If $\varphi$ is not bounded, then dominated convergence implies that
\[
\int_{V} |\nabla (u+\phi)|^p \, dx
   = \lim_{k \to \infty} \int_{V} |\nabla (u+\min\{\phi,k\})|^p \, dx.
\]
Using also \eqref{eq-nabla=g} shows that  $u$ is a fine superminimizer in the sense of Definition~\ref{def-finesuper}.

For the converse implication, assume that $u$ is a fine superminimizer
in $U$. Let $V\Subset U$ be a \p-strict subset of $U$ and let
$\varphi\in \Np_0(V)$ be bounded and nonnegative.
Using \eqref{eq-nabla=g}, we have for any $0<\eps<1$ that
\[
\int_V|\nabla u|^p\,dx\le\int_V|\nabla (u+\eps\varphi)|^p\,dx,
\]
and therefore
\[
\int_V\frac{|\nabla (u+\eps\varphi)|^p-|\nabla u|^p}{\eps}\,dx\ge 0.
\]
From this the inequality
\[
\int_U|\nabla u|^{p-2}\nabla u\cdot \nabla \varphi\,dx\ge 0
\]
follows in the same way as in the proof of
Theorem~5.13 in Heinonen--Kilpel\"ainen--Martio~\cite{HeKiMa}.
\end{proof}

\begin{lem}\label{lem-fine-min-2}
  A function $u$ is a fine minimizer in $U$ if and only
  if it is both a fine subminimizer and a fine superminimizer in $U$.
\end{lem}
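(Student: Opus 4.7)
My plan is as follows. The forward direction is immediate from the definitions: the fine minimizer inequality applied to nonnegative $\phi \in \Np_0(V)$ directly yields the superminimizer condition, while applying it to $-\phi$ (for $\phi \ge 0$) together with the standard identity $g_{-f}=g_f$ shows that $-u$ is a superminimizer, i.e.\ $u$ is a subminimizer.

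For the converse, suppose $u$ is simultaneously a fine sub- and superminimizer in $U$. I would fix a finely open \p-strict subset $V \Subset U$ and an arbitrary $\phi \in \Np_0(V)$, and set $v=u+\phi$. The key idea is the \emph{envelope trick}: consider
\[
w = \max\{u,v\} = u+\phi_\limplus \quad \text{and} \quad z = \min\{u,v\} = u-\phi_\limminus,
\]
where $\phi_\limpm$ are nonnegative and again lie in $\Np_0(V)$, since taking positive and negative parts preserves both $\Np(X)$-membership and the vanishing outside $V$. Applying the superminimizer condition with the admissible nonnegative perturbation $\phi_\limplus$, and the subminimizer condition (i.e.\ the superminimizer condition for $-u$) with the nonnegative perturbation $\phi_\limminus$, yields
\[
\int_V g_u^p \, d\mu \le \int_V g_w^p \, d\mu \quad \text{and} \quad \int_V g_u^p \, d\mu \le \int_V g_z^p \, d\mu.
\]

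The crucial and, in my view, only delicate step is the pointwise identity
\[
g_w^p + g_z^p = g_u^p + g_v^p \quad \text{a.e.\ in } V.
\]
This I would obtain by decomposing $V$ into the measurable pieces $\{u \ge v\}$, where $w=u$ and $z=v$, and $\{v > u\}$, where $w=v$ and $z=u$, and then invoking the locality property of minimal \p-weak upper gradients recorded in Section~\ref{sect-prelim} (namely $g_f=g_h$ a.e.\ on $\{f=h\}$). Integrating the identity, adding the two variational inequalities above, and cancelling one copy of $\int_V g_u^p\, d\mu$ produces $\int_V g_u^p \, d\mu \le \int_V g_v^p \, d\mu$, which is exactly the fine minimizer inequality for the chosen $\phi$. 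Since $V$ and $\phi$ were arbitrary, $u$ is a fine minimizer in~$U$. The remaining ingredients — that $\phi_\limpm \in \Np_0(V)$ and that $u \in \Np(V)$ so all the integrals are finite and cancellation is legal — are routine and require no separate argument.
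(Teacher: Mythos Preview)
Your proof is correct. Both you and the paper decompose $\phi=\phi_\limplus-\phi_\limminus$, apply the super-/subminimizer inequalities to these nonnegative pieces, and then invoke locality of minimal \p-weak upper gradients. The execution differs: the paper changes the testing domain, applying \eqref{eq-def-finesuper} with $\phi_\limpm$ on the quasiopen \p-strict subsets $\{\phi_\limpm\ne0\}\Subset U$ (which requires Theorem~\ref{thm-quasiopen-char} and Remark~\ref{rmk-Npfineloc-def} to verify admissibility), and then observes directly that $g_{u+\phi_\limplus}=g_{u+\phi}$ on $\{\phi_\limplus\ne0\}$, and similarly for $\phi_\limminus$. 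You instead keep the fixed domain $V$, apply both inequalities on all of $V$, and combine them via the pointwise identity $g_w^p+g_z^p=g_u^p+g_v^p$ followed by cancellation. Your route is arguably a bit cleaner here since it avoids checking that the level sets are admissible testing sets; the price is the additional add-and-cancel step, which is legitimate because $u\in\Np(V)$ makes $\int_V g_u^p\,d\mu$ finite.
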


\begin{proof} Assume that $u$ is
both a fine subminimizer and a fine superminimizer in $U$.
Let $V\Subset U$ be a finely open \p-strict subset and let $\phi \in \Np_0(V)$.
We may assume that $\phi=0$ everywhere in $X\setminus V$.
Since $\{\phi_\limpm \ne 0\}$  are quasiopen \p-strict subsets of
$U$ (by Theorem~\ref{thm-quasiopen-char}), testing
\eqref{eq-def-finesuper} with $\phi_\limpm$ implies that
\begin{align*}
    \int_{\{\phi \ne 0\}} g_{u}^p \, d\mu
    &= \int_{\{\phi_\limplus \ne 0\}} g_{u}^p \, d\mu
       + \int_{\{\phi_\limminus \ne 0\}} g_{u}^p \, d\mu \\
  & \le \int_{\{\phi_\limplus \ne 0\}} g_{u+ \phi_\limplus}^p \, d\mu
             + \int_{\{\phi_\limminus \ne 0\}} g_{u-\phi_\limminus}^p \, d\mu
    =  \int_{\{\phi \ne 0\}} g_{u+\phi}^p \, d\mu,
\end{align*}
see Remark~\ref{rmk-Npfineloc-def}. Adding $\int_{V\cap\{\phi=0\}}g_{u}^p \, d\mu=\int_{V\cap\{\phi=0\}}g_{u+\phi}^p \, d\mu$
to both sides shows that $u$ is a fine minimizer. The converse implication is trivial.
\end{proof}

The following characterization
is quite convenient.
It also shows that
condition \eqref{eq-def-finesuper} in Definition~\ref{def-finesuper}
can equivalently
be
required to hold for
arbitrary $V \subset U$.

\begin{lem} \label{lem-char-1}
  Let   $u \in \Npploc(U)$.
  Then $u$ is a fine\/ \textup{(}super\/\textup{)}minimizer
in $U$
if and only if
\begin{equation} \label{eq-fine-supermin-char}
\int_{\{\phi \ne 0\}} g_{u}^p \, d\mu
\le  \int_{\{\phi \ne 0\}} g_{u+\phi}^p \, d\mu
\end{equation}
for every\/ \textup{(}nonnegative\/\textup{)} $\phi \in \Np_0(U)$.
\end{lem}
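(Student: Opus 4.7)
The plan is to prove the characterization first in the superminimizer case and then deduce the minimizer case via Lemma~\ref{lem-fine-min-2}. In both directions, the key bookkeeping device is that the minimal \p-weak upper gradients $g_u$ and $g_{u+\phi}$ coincide a.e.\ on $\{\phi = 0\}$, so the integrals in \eqref{eq-def-finesuper} and \eqref{eq-fine-supermin-char} differ only by an integral over $\{\phi = 0\}$ that is the same on both sides.

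The converse direction is easy in both cases. Let $V \Subset U$ be a finely open \p-strict subset and $\phi \in \Np_0(V)$ (nonnegative in the superminimizer setting). Extending by zero, $\phi \in \Np_0(U)$, and since $\phi = 0$ q.e.\ outside $V$, the set $\{\phi \ne 0\}$ lies inside $V$ up to a set of capacity zero. Adding $\int_{V \cap \{\phi = 0\}} g_u^p\,d\mu = \int_{V \cap \{\phi = 0\}} g_{u+\phi}^p\,d\mu$ to both sides of~\eqref{eq-fine-supermin-char} then recovers~\eqref{eq-def-finesuper}.

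The real work is the forward direction in the superminimizer case. Given $0 \le \phi \in \Np_0(U)$, apply Proposition~\ref{prop-density} to obtain finely open \p-strict subsets $V_j \Subset U$ and bounded $\phi_j \in \Np_0(V_j)$ with $0 \le \phi_j \le \phi_{j+1} \le \phi$, $\phi_j \to \phi$ in $\Np(X)$, and $\phi_j \to \phi$ q.e. The defining superminimizer inequality on $V_j$ with test function $\phi_j$, after subtracting the common contribution of $V_j \cap \{\phi_j = 0\}$ (where $g_u = g_{u+\phi_j}$ a.e.) and using that $\{\phi_j \ne 0\} \subset V_j$ up to capacity zero, reduces to
\[
\int_{\{\phi_j \ne 0\}} g_u^p\,d\mu \le \int_{\{\phi_j \ne 0\}} g_{u+\phi_j}^p\,d\mu.
\]
Because $\chi_{\{\phi_j \ne 0\}} \nearrow \chi_{\{\phi \ne 0\}}$ a.e., monotone convergence sends the left-hand side to $\int_{\{\phi \ne 0\}} g_u^p\,d\mu$. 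On the right, $\phi_j \to \phi$ in $\Np(X)$ gives $g_{u+\phi_j} \to g_{u+\phi}$ in $L^p(X)$, hence $g_{u+\phi_j}^p \to g_{u+\phi}^p$ in $L^1(X)$ by the elementary inequality $|a^p - b^p| \le p(a^{p-1}+b^{p-1})|a-b|$ and H\"older; combined with $\{\phi_j \ne 0\} \subset \{\phi \ne 0\}$, this bounds the right-hand side in the limit by $\int_{\{\phi \ne 0\}} g_{u+\phi}^p\,d\mu$.

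For the forward direction in the minimizer case, by Lemma~\ref{lem-fine-min-2} both $u$ and $-u$ are fine superminimizers. Given $\phi \in \Np_0(U)$, decompose $\phi = \phi_\limplus - \phi_\limminus$ and apply the already established superminimizer characterization to $u$ with test function $\phi_\limplus$ and to $-u$ with $\phi_\limminus$, using $g_{-u} = g_u$ together with $u+\phi = u+\phi_\limplus$ on $\{\phi_\limplus \ne 0\}$ and $u+\phi = u-\phi_\limminus$ on $\{\phi_\limminus \ne 0\}$; adding the two inequalities over the disjoint sets $\{\phi_\limplus \ne 0\}$ and $\{\phi_\limminus \ne 0\}$ yields~\eqref{eq-fine-supermin-char}. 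The main obstacle is the limit passage in the superminimizer step, where one must reconcile the monotone convergence on the left, the $\Np$-convergence of upper gradients on the right, and the moving integration domains $\{\phi_j \ne 0\}$; Proposition~\ref{prop-density} is precisely what delivers all three properties simultaneously.
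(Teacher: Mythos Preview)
Your argument follows the same route as the paper's proof: Proposition~\ref{prop-density} for the forward direction, locality of upper gradients on $\{\phi=0\}$, and Lemma~\ref{lem-fine-min-2} for the minimizer case. There is, however, one genuine imprecision. You assert that $g_{u+\phi_j}^p \to g_{u+\phi}^p$ in $L^1(X)$ via the mean-value inequality and H\"older, but since $u$ is only in $\Npploc(U)$ the functions $g_{u+\phi_j}$ and $g_{u+\phi}$ need not lie in $L^p(X)$, so the H\"older step cannot be carried out globally. The repair is easy: if $\int_{\{\phi \ne 0\}} g_{u+\phi}^p\,d\mu = \infty$ the desired inequality is trivial (the paper remarks that both sides are then infinite simultaneously); otherwise restrict the H\"older argument to $\{\phi \ne 0\}$, where $g_{u+\phi}\in L^p$ and the $L^p$-norms of $g_{u+\phi_j}$ are bounded by $\|g_{u+\phi}\|_{L^p(\{\phi\ne0\})} + \|g_{\phi_j-\phi}\|_{L^p(X)}$.

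For comparison, the paper sidesteps this by a slightly cleaner rearrangement: since $\phi_j \equiv 0$ outside $V_j$, one has $g_u = g_{u+\phi_j}$ a.e.\ on $\{\phi \ne 0\} \setminus V_j$, so the superminimizer inequality on $V_j$ upgrades directly to
\[
\int_{\{\phi \ne 0\}} g_u^p \, d\mu \le \int_{\{\phi \ne 0\}} g_{u+\phi_j}^p \, d\mu
\quad \text{for every } j.
\]
Only one limit (on the right, via $\|\phi_j-\phi\|_{\Np(X)}\to 0$) is then needed, and no monotone convergence on the left; your version works too once the finiteness issue is handled, but requires tracking the moving domains $\{\phi_j\ne0\}$ on both sides.
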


Note that for some $\phi$ the integrals
in \eqref{eq-fine-supermin-char}
may be infinite,
but then they are always infinite
simultaneously. The characterization in Lemma~\ref{lem-char-1}
is in contrast to the definition~\eqref{eq-fine-supersol}
of supersolutions, where $V=U$ is allowed only if $u\in \Np(U)$.

\begin{proof}
Assume first that $u$ is a fine superminimizer and
that $\phi \in \Np_0(U)$ is nonnegative.
By Proposition~\ref{prop-density}, there are
finely open \p-strict subsets $V_j \Subset U$
and functions $\phi_j \in \Np_0(V_j)$
such that $0 \le \phi_j \le \phi$ and
\begin{equation} \label{eq-phij}
  \lim_{j \to \infty} \|\phi_j - \phi\|_{\Np(X)} =0.
\end{equation}
Since  $u$ is a fine superminimizer, we see that
\[
   \int_{V_j} g_{u}^p \, d\mu
  \le  \int_{V_j} g_{u+\phi_j}^p \, d\mu
  =
  \int_{V_j \cap \{\phi \ne 0\}} g_{u+\phi_j}^p \, d\mu
  + \int_{V_j \cap \{\phi = 0\}} g_{u}^p \, d\mu.
\]
As $u \in \Np(V_j)$ the last term is finite, and we can thus subtract it
from both sides in the inequality obtaining
\begin{align*}
\int_{\{\phi \ne 0\}} g_{u}^p \, d\mu
  &= \int_{\{\phi \ne 0\} \setm V_j} g_{u+\phi_j}^p \, d\mu
     + \int_{V_j \cap \{\phi \ne 0\}} g_{u}^p \, d\mu \\
  &\le  \int_{\{\phi \ne 0\} \setm V_j} g_{u+\phi_j}^p \, d\mu
     + \int_{V_j \cap \{\phi \ne 0\}} g_{u+\phi_j}^p \, d\mu
   =  \int_{\{\phi \ne 0\}} g_{u+\phi_j}^p \, d\mu,
\end{align*}
which together with \eqref{eq-phij}  shows
that \eqref{eq-fine-supermin-char} holds.

Conversely, let $V \Subset U$ be a finely open
\p-strict subset  and $\phi \in \Np_0(V)$ be nonnegative.
It then follows from
\eqref{eq-fine-supermin-char}
and the fact that $g_u=g_{u+\phi}$ on $\{x:\phi(x)=0\}$,
  that \eqref{eq-def-finesuper}
holds and thus $u$ is a fine superminimizer.
The claim for fine minimizers follows from
Lemma~\ref{lem-fine-min-2}.
\end{proof}

\begin{cor} \label{cor-fine-min}
  Let $G$ be an open set.
  Then $u$ is a fine\/ \textup{(}super\/\textup{)}minimizer in $G$
  if and only if it is a\/ \textup{(}super\/\textup{)}minimizer in $G$.
\end{cor}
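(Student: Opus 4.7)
The plan is to prove the corollary directly from the characterizations already in hand. Lemma~\ref{lem-open-newton-equal} ensures that both notions are formulated over the same function space $\Nploc(G)=\Npploc(G)$, so only the testing condition is really at issue. For the implication from fine to standard, I would note that every open $V\Subset G$ is in particular finely open, and a Lipschitz cutoff that equals $1$ on $\overline V$ and is compactly supported in the open set $G$ belongs to $\Np_0(G)$, so $V$ is a \p-strict subset of $G$. Applying Definition~\ref{def-finesuper} with such $V$ then directly yields the usual (super)minimizer condition.

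For the converse, suppose $u$ is a (super)minimizer in the usual sense in $G$. I would fix a finely open \p-strict subset $V\Subset G$ and a test function $\phi\in \Np_0(V)$ (nonnegative in the superminimizer case). Since $\overline V$ is compact in the open set $G$, one can choose an open $G'$ with $V\Subset G'\Subset G$. The extension-by-zero of $\phi$ then satisfies $\phi=0$ on $G'\setm V\subset X\setm V$, so $\phi\in \Np_0(G')$, and the standard (super)minimizer condition applied with the open test set $G'$ gives
\[
\int_{G'} g_u^p\, d\mu \le \int_{G'} g_{u+\phi}^p\, d\mu.
\]
Because $G'\Subset G$ and $u\in \Nploc(G)$, the integral $\int_{G'\cap\{\phi=0\}}g_u^p\,d\mu$ is finite, and it coincides with $\int_{G'\cap\{\phi=0\}}g_{u+\phi}^p\,d\mu$ since $g_u=g_{u+\phi}$ a.e.\ on $\{\phi=0\}$. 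Subtracting this common finite contribution and using $\{\phi\ne 0\}\subset V$, I obtain the fine (super)minimizer inequality on $V$.

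The minimizer statement will follow by applying the superminimizer equivalence to both $u$ and $-u$, together with Lemma~\ref{lem-fine-min-2}. I do not foresee any essential obstacle here; the argument is almost entirely bookkeeping. The only mild technicality is the insertion of an open intermediate set $G'$ between $V$ and $G$ to access the standard testing class, followed by the subtraction of the common finite part to avoid indeterminate forms. Both steps are routine given $V\Subset G$ and $u\in\Nploc(G)$.
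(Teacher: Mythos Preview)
Your argument is correct. The paper's proof is shorter because it invokes Lemma~\ref{lem-char-1} together with \cite[Proposition~7.9]{BBbook}: both characterize (fine) \mbox{(super)}minimizers by the single testing condition
\(
\int_{\{\phi\ne 0\}} g_u^p\,d\mu \le \int_{\{\phi\ne 0\}} g_{u+\phi}^p\,d\mu
\)
for all (nonnegative) $\phi\in\Np_0(G)$, so the equivalence is immediate once $\Nploc(G)=\Npploc(G)$ is in place. Your route is more hands-on: you bypass Lemma~\ref{lem-char-1} entirely and compare the two definitions directly by sandwiching a finely open \p-strict $V\Subset G$ inside an open $G'\Subset G$, then use the subtraction-of-the-common-finite-part trick (the same trick that appears inside the proof of Lemma~\ref{lem-char-1}). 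The trade-off is that your argument is more self-contained and does not rely on the density result Proposition~\ref{prop-density} that underlies Lemma~\ref{lem-char-1}, while the paper's version is a two-line citation of results already established. Both are perfectly valid; your final reduction of the minimizer case to the superminimizer case via Lemma~\ref{lem-fine-min-2} is also fine and matches the spirit of the paper.
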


Here we define (super)minimizers as in
Definition~7.7 in~\cite{BBbook}.

\begin{proof}
  Since $G$ is open, $\Nploc(G)=\Npploc(G)$
by Lemma~\ref{lem-open-newton-equal}.
The equivalence then follows directly from Lemma~\ref{lem-char-1}
together with \cite[Proposition~7.9]{BBbook}.
\end{proof}

\begin{lem} \label{lem-paste-supermin}
\textup{(Pasting lemma)}
Assume that\/ $U_1 \subset U_2$  are finely open sets, and
that $u_1$  and $u_2$ are fine superminimizers in\/ $U_1$ and\/ $U_2$,
respectively.
Let
\[
     u=\begin{cases}
        u_2 & \text{in\/ } U_2 \setm U_1, \\
        \min\{u_1,u_2\} & \text{in\/ } U_1.
	\end{cases}
\]
If $u \in \Npploc(U_2)$, then $u$ is a fine superminimizer in\/ $U_2$.
\end{lem}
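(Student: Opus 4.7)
\emph{Plan.} By Lemma~\ref{lem-char-1} it suffices to show that
\[
\int_{\{\phi \ne 0\}} g_u^p \, d\mu \le \int_{\{\phi \ne 0\}} g_{u+\phi}^p \, d\mu
\]
for every nonnegative $\phi \in \Np_0(U_2)$. Write $v = u + \phi$ and partition $U_2$, up to a set of measure zero, as $A \cup B$ with
\[
A := U_1 \cap \{u_1 < u_2\} \quad \textup{and} \quad B := U_2 \setm A.
\]
Then $A$ is finely open, $u = u_1$ on $A$, and $u = u_2$ on $B$, so by the standard identity for minimal \p-weak upper gradients on sets where two Newtonian functions agree we have $g_u = g_{u_1}$ and $g_v = g_{u_1 + \phi}$ a.e.\ on $A$, and $g_u = g_{u_2}$ and $g_v = g_{u_2 + \phi}$ a.e.\ on $B$.

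The core of the argument is the test-function decomposition $\phi = \psi_1 + \psi_2$, where
\[
\psi_1 := \min\{\phi,\, u_2 - u\} \textup{ on } U_2, \qquad \psi_2 := \phi - \psi_1,
\]
extended by $0$ outside $U_2$. Both are nonnegative, since $u \le u_2$ on $U_2$, and $\psi_1$ is supported in $A$ because $u_2 - u$ vanishes precisely on $U_2 \setm A$. The main technical point is to verify that $\psi_1 \in \Np_0(U_1)$ and $\psi_2 \in \Np_0(U_2)$. Here the hypothesis $u \in \Npploc(U_2)$ is essential: it turns $u_2 - u$ into a globally defined function in $\Npploc(U_2)$, and combined with a \p-strict cutoff around $\spt\phi$ furnished by Proposition~\ref{prop-density} and the lattice properties of $\Np$, one obtains the required $L^p$-bound on $g_{\psi_1}$. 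The crucial trick is to define $\psi_1$ via $u_2-u$ rather than $u_2-u_1$, thereby sidestepping any delicate analysis at the fine boundary of $U_1$ inside $U_2$, where $u_1$ is not defined but $u$ is.

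Applying Lemma~\ref{lem-char-1} to $u_1$ with test $\psi_1$ and to $u_2$ with test $\psi_2$ yields two inequalities. A direct computation shows that on $A$ one has $u_1 + \psi_1 = \min\{u_1 + \phi,\, u_2\}$ and $u_2 + \psi_2 = \max\{u_2,\, u_1 + \phi\}$, and the pointwise formulas $g_{\min\{f,h\}} = g_f\chi_{\{f\le h\}} + g_h\chi_{\{f>h\}}$ together with its analogue for $\max$ identify the corresponding integrands. Summing the two inequalities, the overlapping contribution $\int_{A \cap \{\phi > u_2 - u_1\}} g_{u_2}^p \, d\mu$ cancels on both sides, while the two pieces of $g_{u_1+\phi}^p$ over $A \cap \{0 < \phi \le u_2-u_1\}$ and $A \cap \{\phi > u_2 - u_1\}$ combine into a single integral on $A \cap \{\phi > 0\}$. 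The result is
\[
\int_{A \cap \{\phi > 0\}} g_{u_1}^p \, d\mu + \int_{B \cap \{\phi > 0\}} g_{u_2}^p \, d\mu \le \int_{A \cap \{\phi > 0\}} g_{u_1+\phi}^p \, d\mu + \int_{B \cap \{\phi > 0\}} g_{u_2+\phi}^p \, d\mu,
\]
which, via the a.e.\ identities on $A$ and $B$ recorded above, is precisely $\int_{\{\phi > 0\}} g_u^p \, d\mu \le \int_{\{\phi > 0\}} g_v^p \, d\mu$.

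The hardest step is the admissibility of $\psi_1$ as a test function in $\Np_0(U_1)$, since one must control $\psi_1$ at the fine boundary of $U_1$ inside $U_2$ without having $u_1$ defined there; this is exactly the role of the pasting $u \in \Npploc(U_2)$ in the hypothesis, and it is what makes the definition $\psi_1 = \min\{\phi, u_2-u\}$ preferable to the more naive $\min\{\phi, u_2-u_1\}$.
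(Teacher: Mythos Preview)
Your approach is essentially the same as the paper's: your decomposition $\psi_1=\min\{\phi,u_2-u\}$, $\psi_2=\phi-\psi_1$ is identical to the paper's $\phi_1=\phi-\phi_2$, $\phi_2=(u+\phi-u_2)_\limplus$, just written in the opposite order. The only organizational difference is that the paper tests directly with $\phi\in\Np_0(V)$ for a finely open \p-strict $V\Subset U_2$ (so $u,u_2\in\Np(V)$ and the membership $\phi_1,\phi_2\in\Np_0(V)$, as well as the finiteness needed for your cancellation of $\int_{A\cap\{\phi>u_2-u_1\}}g_{u_2}^p\,d\mu$, are immediate), whereas you start from $\phi\in\Np_0(U_2)$ via Lemma~\ref{lem-char-1} and then localize through Proposition~\ref{prop-density}; the paper's ordering is slightly cleaner but the substance is the same.
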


\begin{proof} Assume that $u \in \Npploc(U_2)$.
Let $V\Subset U_2$ be a finely open \p-strict subset and $0\le \phi \in \Np_0(V)$.

Let $\phi_2=(u+\phi-u_2)_\limplus$ and $\phi_1=\phi-\phi_2$.
Lemma~2.37 in~\cite{BBbook} implies that $\phi_2\in\Np_0(V)$.
It is also easily verified that $\phi_1=0$ outside $U_1$  and hence
$\phi_1\in\Np_0(U_1)$.
Lemma~\ref{lem-char-1} applied to $u_1$, together with the facts
that $u=u_1$ when $\phi_1>0$ and
$u=u_2$ when $\phi=\phi_2>0$, therefore yields
\begin{align*}
\int_{\{\phi >0\}} g^p_u \,d\mu &=  \int_{\{\phi_1>0\}} g^p_{u_1} \,d\mu
                 + \int_{\{\phi=\phi_2>0\}} g^p_{u_2} \,d\mu \\
&\le \int_{\{\phi_1> 0\}} g^p_{u_1+\phi_1} \,d\mu
                 + \int_{\{\phi=\phi_2>0\}} g^p_{u_2} \,d\mu \\
&= \int_{\{\phi=\phi_1> 0\}} g^p_{u+\phi} \,d\mu
              + \int_{\{\phi>\phi_1>0\}} g^p_{u+\phi_1} \,d\mu
              + \int_{\{\phi=\phi_2>0\}} g^p_{u_2} \,d\mu.
\end{align*}
Note that $u+\phi_1=u_2$ in
$\{x:\phi(x)>\phi_1(x)>0\}=\{x:\phi(x)>\phi_2(x)>0\}$.
Summing the last two integrals, we thus obtain
\begin{align*}
\int_{\{\phi >0\}} g^p_u \,d\mu &\le \int_{\{\phi=\phi_1>0\}} g^p_{u+\phi} \,d\mu
              + \int_{\{\phi_2>0\}} g^p_{u_2} \,d\mu \\
&\le \int_{\{\phi=\phi_1>0\}} g^p_{u+\phi} \,d\mu
              + \int_{\{\phi_2> 0\}} g^p_{u_2+\phi_2} \,d\mu,
\end{align*}
where in the last step we used Lemma~\ref{lem-char-1}, applied to $u_2$.
Since $u_2+\phi_2=u+\phi$ in $\{\phi_2>0\}$, we conclude that
\[
\int_V g^p_u \,d\mu = \int_{\{\phi >0\}} g^p_u \,d\mu
    + \int_{V\cap\{\phi=0\}} g^p_{u+\phi} \, d\mu
  \le \int_V g^p_{u+\phi} \, d\mu,
\]
i.e.\ \eqref{eq-def-finesuper} in the definition of fine superminimizers
holds.
\end{proof}

\begin{cor} \label{cor-min-supermin}
If $u$ and $v$ are fine superminimizers in $U$,
then $\min\{u,v\}$ is also a fine superminimizer in $U$.
\end{cor}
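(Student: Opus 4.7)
The plan is to reduce the statement to the pasting lemma (Lemma~\ref{lem-paste-supermin}) by partitioning $U$ according to where $u<v$. Fine superminimizers belong to $\Npploc(U)$, so by the remarks following Definition~\ref{def-fineloc} both $u$ and $v$ are finite and finely continuous outside some set $E$ with $\Cp(E)=0$. Since $g_u$, $g_v$ and the defining inequality of fine superminimizers are insensitive to modifications on sets of capacity zero, and since $U\setm E$ is still finely open (as noted after Theorem~\ref{thm-quasiopen-char}), I may without loss of generality assume that $u$ and $v$ are finite and finely continuous at every point of $U$.

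Set $w=\min\{u,v\}$ and $U_1=\{x\in U:u(x)<v(x)\}$. Fine continuity of $v-u$ on $U$ shows that $U_1$ is finely open, and clearly $U_1\subset U$. Moreover, the restriction of $u$ to $U_1$ is a fine superminimizer in $U_1$: every finely open \p-strict subset $V\Subset U_1$ is also a \p-strict subset of $U$, and any $\phi\in\Np_0(V)$ is admissible as a test function in the defining inequality over $U$ as well.

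I would then apply Lemma~\ref{lem-paste-supermin} with $U_1$ as above, $U_2=U$, $u_1=u$ and $u_2=v$. On $U_2\setm U_1=\{u\ge v\}$ the pasted function equals $v=w$, while on $U_1=\{u<v\}$ it equals $\min\{u_1,u_2\}=u=w$, so the pasted function is precisely $w$. To invoke the pasting lemma I must also verify that $w\in\Npploc(U)$; this is immediate, because for every finely open \p-strict subset $V\Subset U$ both $u,v\in\Np(V)$, and $\Np(V)$ is a lattice, whence $w\in\Np(V)$.

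The only delicate point is ensuring that $U_1$ is finely open, which is why the first step passes to the finely continuous representatives supplied by Theorem~\ref{thm-newton-quasicont}\,\ref{k-3}. Beyond that representative choice I expect no further obstacles, as the remainder is a direct application of the pasting lemma.
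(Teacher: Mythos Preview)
Your argument is correct in spirit, but it takes a detour that the paper avoids. The corollary is stated in the paper without proof precisely because it follows from Lemma~\ref{lem-paste-supermin} by the trivial choice $U_1=U_2$ (equal to $\fineint U$, using Remark~\ref{rmk-compare-finemin-U-fineint} if $U$ is only quasiopen). With $U_1=U_2$ the set $U_2\setm U_1$ is empty, the pasted function is simply $\min\{u,v\}$, and the only thing to check is that $\min\{u,v\}\in\Npploc(U_2)$, which you already verified via the lattice property of $\Np(V)$. No finely continuous representatives are needed, and no auxiliary set $U_1=\{u<v\}$ has to be shown finely open.

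One small slip in your reduction: you write that ``$U\setm E$ is still finely open (as noted after Theorem~\ref{thm-quasiopen-char})'', but that remark applies only when $U$ itself is finely open, whereas in Section~\ref{sect-fine-min} the standing assumption is merely that $U$ is quasiopen. The fix is easy---absorb $U\setm\fineint U$ (which has zero capacity by Theorem~\ref{thm-quasiopen-char}) into $E$ and work in $\fineint U\setm E$---but with the direct choice $U_1=U_2$ the issue never arises.
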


Assume that $E$ is an arbitrary measurable set. Then the space
$\Npploc(E)$ as well as fine minimizers and fine superminimizers in
$E$ can be defined in the same way as in Definitions~\ref{def-fineloc}
and~\ref{def-finesuper} (just replacing $U$ be $E$). The following
characterization suggests that the notions of fine superminimizers and
minimizers might not be very interesting beyond quasiopen sets.

\begin{prop}     \label{prop:beyond}
Let $E$ be measurable and assume that $u \in \Npploc(E)$.
Then $u$ is a  fine\/ \textup{(}super\/\textup{)}\-mi\-ni\-mi\-zer in $E$
if and only if it is a  fine\/ \textup{(}super\/\textup{)}minimizer in
$V:=\fineint E$.
\end{prop}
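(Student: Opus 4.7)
The plan is to prove both implications by tracking carefully what changes when $E$ is replaced by the finely open set $V := \fineint E$. Two preliminary observations drive the argument. First, since $V$ is finely open it is quasiopen by Theorem~\ref{thm-quasiopen-char}, so Lemma~\ref{lem-char-1} is available on $V$. Second, for any finely open \p-strict subset $W \Subset V$ the compactness of $\overline{W}$ in $V$ yields compactness in $E$, and any witness $\eta \in \Np_0(V)$ for $W$ being \p-strict in $V$ vanishes on $X \setminus V \supset X \setminus E$ and hence also lies in $\Np_0(E)$. Consequently $W$ is a finely open \p-strict subset of $E$ with $W \Subset E$, and in particular $\Npploc(E) \subset \Npploc(V)$.

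For the forward implication, suppose $u$ is a fine superminimizer in $E$. The second observation above shows that every admissible test pair $(W,\phi)$ for the $V$-definition is already admissible for the $E$-definition, so \eqref{eq-def-finesuper} follows by specialization and $u$ is a fine superminimizer in $V$.

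For the backward implication, suppose $u$ is a fine superminimizer in $V$, and let $W \Subset E$ be a finely open \p-strict subset with $\phi \in \Np_0(W)$ nonnegative. Since $V$ is the largest finely open subset of $E$ we have $W \subset V$, hence $\phi \in \Np_0(W) \subset \Np_0(V)$ after extension by zero. The subtle point is that $\overline{W}$ may meet $E \setminus V$, so $W \Subset V$ need not hold and the $V$-definition cannot be tested directly on $W$. This is precisely where Lemma~\ref{lem-char-1}, applied to the quasiopen set $V$, rescues us: it yields
\[
   \int_{\{\phi \neq 0\}} g_u^p \, d\mu \le \int_{\{\phi \neq 0\}} g_{u+\phi}^p \, d\mu.
\]
On $W \cap \{\phi = 0\}$ we have $u + \phi = u$, so $g_{u+\phi} = g_u$ a.e.\ there, and since $u \in \Np(W)$ the integral $\int_{W \cap \{\phi = 0\}} g_u^p \, d\mu$ is finite. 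Adding it to both sides produces \eqref{eq-def-finesuper} on $W$, so $u$ is a fine superminimizer in $E$.

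The fine minimizer equivalence then follows by combining the superminimizer equivalence for $u$ and $-u$ via Lemma~\ref{lem-fine-min-2}. The main obstacle is the mismatch between the requirement $W \Subset V$ in the definition and the weaker property $W \subset V$ that one gets for free in the backward direction; the global characterization in Lemma~\ref{lem-char-1} is the crucial tool that bridges this gap.
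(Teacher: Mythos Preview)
Your proof is correct and uses the same key tool as the paper, namely Lemma~\ref{lem-char-1} on the quasiopen set $V=\fineint E$, but the route to the backward implication is genuinely different. The paper takes an arbitrary nonnegative $\phi\in\Np_0(E)$, invokes Theorem~7.3 in~\cite{BBnonopen} to conclude that $\phi\in\Np_0(V)$, applies Lemma~\ref{lem-char-1} on $V$, and then observes that Proposition~\ref{prop-density} (and hence Lemma~\ref{lem-char-1}) also holds with $E$ in place of a quasiopen set, so the characterization~\eqref{eq-fine-supermin-char} yields the conclusion. You instead test directly against the definition on $E$: for a finely open \p-strict $W\Subset E$ you use the elementary inclusion $W\subset V$, which makes $\phi\in\Np_0(W)\subset\Np_0(V)$ immediate, and then finish by adding the finite integral over $W\cap\{\phi=0\}$. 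Your approach is more self-contained, avoiding both the external reference~\cite{BBnonopen} and the extension of Lemma~\ref{lem-char-1} to general measurable sets; the paper's approach is slightly more conceptual in that it establishes the full characterization~\eqref{eq-fine-supermin-char} on $E$. One minor remark: your appeal to Lemma~\ref{lem-fine-min-2} for the minimizer case tacitly uses that lemma on $E$, which the paper only states for quasiopen sets; it would be cleaner simply to note that your backward argument works verbatim for arbitrary $\phi\in\Np_0(W)$, since Lemma~\ref{lem-char-1} on $V$ covers both cases.
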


\begin{proof}
Assume that $u$ is a fine superminimizer in $V$, and let $\phi \in \Np_0(E)$ be nonnegative.
By Theorem~7.3 in Bj\"orn--Bj\"orn~\cite{BBnonopen} we see that $\phi \in \Np_0(V)$.
By Lemma~\ref{lem-char-1},
\[
   \int_{\{\phi \ne 0\}} g_{u}^p \, d\mu
\le  \int_{\{\phi \ne 0\}} g_{u+\phi}^p \, d\mu.
\]
Since Proposition~\ref{prop-density} holds for $E$, so does
Lemma~\ref{lem-char-1}, from which it follows that $u$ is a fine superminimizer in $E$.
The converse implication is clear and the proof for fine minimizers is similar.
\end{proof}

\section{The obstacle and Dirichlet problems}
\label{sect-obst}

The obstacle problem will be a fundamental tool for studying fine minimizers.

\begin{deff} \label{deff-obst-E}
Assume that $U$ is bounded and $\Cp(X \setm U)>0$.
Let $f \in \Np(U)$ and $\psi : U \to \eR$.
Then we define
\begin{equation*}
    \K_{\psi,f}(U)=\{v \in \Np(U) : v-f \in \Np_0(U)
            \text{ and } v \ge \psi \ \text{q.e. in } U\}.
\end{equation*}
A function $u \in \K_{\psi,f}(U)$
is a \emph{solution of the $\K_{\psi,f}(U)$-obstacle problem}
if
\[
       \int_U g^p_{u} \, d\mu
       \le \int_U g^p_{v} \, d\mu
       \quad \text{for all } v \in \K_{\psi,f}(U).
\]
\end{deff}

The \emph{Dirichlet problem} is a special case of the obstacle
problem, with the trivial obstacle $\psi \equiv -\infty$.
Note that the boundary data $f$ are only required to belong to $\Np(U)$,
i.e.\ $f$ need not be defined on $\bdry U$
or the fine boundary $\bdyp U$.

\begin{thm}\label{thm-obstacle}
Assume that $U$ is bounded and $\Cp(X \setm U)>0$.
Let $f \in \Np(U)$  and $\psi : U \to \eR$,
and assume that $\K_{\psi,f}(U) \ne \emptyset$.
Then there exists a solution $u$ of the $\K_{\psi,f}(U)$-obstacle problem,
and this solution is unique q.e.
Moreover,  $u$ is a fine superminimizer in $U$.

If $\psi \equiv -\infty$ in $U$ or if $\psi$ is a fine subminimizer
in $U$, then $u$ is a fine minimizer in $U$.
\end{thm}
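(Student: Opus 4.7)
The plan is to combine the classical direct method of the calculus of variations with the fine-superminimizer characterization in Lemma~\ref{lem-char-1}. For a minimizing sequence $\{v_j\}\subset\K_{\psi,f}(U)$ of the functional $v\mapsto\int_U g_v^p\,d\mu$, the boundedness of $U$ together with $\Cp(X\setm U)>0$ and a Maz'ya/Poincar\'e-type inequality bound $\{v_j\}$ in $\Np(U)$ (via $v_j-f\in\Np_0(U)$). Mazur's lemma together with Fuglede's lemma for $p$-weak upper gradients produce convex combinations converging to some $u\in\K_{\psi,f}(U)$ both in $\Np(U)$-norm and pointwise q.e.; the q.e.\ convergence preserves $u\ge\psi$ q.e., and the $\Np$-closedness of $\Np_0(U)$ gives $u-f\in\Np_0(U)$. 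The lower semicontinuity of the $p$-energy then shows that $u$ minimizes. Uniqueness q.e.\ follows from the strict convexity of $t\mapsto t^p$: if $u_1,u_2$ are two solutions, then $\tfrac12(u_1+u_2)\in\K_{\psi,f}(U)$ forces $g_{u_1}=g_{u_2}$ a.e., and the Maz'ya inequality applied to $u_1-u_2\in\Np_0(U)$ yields $u_1=u_2$ q.e.

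\emph{Fine superminimizer.} Given a finely open \p-strict subset $V\Subset U$ and $0\le\phi\in\Np_0(V)$, extension by zero gives $\phi\in\Np_0(U)$ with $u+\phi\ge u\ge\psi$ q.e., so $u+\phi\in\K_{\psi,f}(U)$ and
\[
\int_U g_u^p\,d\mu\le\int_U g_{u+\phi}^p\,d\mu.
\]
Since $g_u=g_{u+\phi}$ a.e.\ on $\{\phi=0\}$, this collapses to the defining inequality~\eqref{eq-def-finesuper} on $V$, so $u$ is a fine superminimizer in $U$. If $\psi\equiv-\infty$ the obstacle condition is vacuous, so the same argument applies to $\phi\in\Np_0(V)$ of arbitrary sign, making $u$ also a fine subminimizer and, by Lemma~\ref{lem-fine-min-2}, a fine minimizer.

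\emph{Fine minimizer when $\psi$ is a fine subminimizer.} We establish the fine subminimizer property of $u$ and then invoke Lemma~\ref{lem-fine-min-2}. For a nonnegative $\phi\in\Np_0(V)$ split $\phi=\phi_1+\phi_2$ with
\[
\phi_1=\min\{\phi,\,u-\psi\},\qquad \phi_2=\phi-\phi_1,
\]
both nonnegative, so that $u-\phi_1=\max\{u-\phi,\psi\}\ge\psi$ q.e. Granting $\phi_1\in\Np_0(V)$ (see below), $u-\phi_1\in\K_{\psi,f}(U)$, and the minimality of $u$ gives
\[
\int_{\{\phi_1>0\}} g_u^p\,d\mu\le\int_{\{\phi_1>0\}} g_{u-\phi_1}^p\,d\mu.
\]
On $\{\phi_2>0\}$ we have $u-\phi_1=\psi$ and $u-\phi=\psi-\phi_2$, so Lemma~\ref{lem-char-1} applied to the fine subminimizer $\psi$ with test function $\phi_2$ yields
\[
\int_{\{\phi_2>0\}} g_\psi^p\,d\mu\le\int_{\{\phi_2>0\}} g_{\psi-\phi_2}^p\,d\mu.
\]
Since $u-\phi_1=u-\phi$ on $\{\phi_2=0\}\cap\{\phi>0\}$, adding the two inequalities produces
\[
\int_{\{\phi>0\}} g_u^p\,d\mu\le\int_{\{\phi>0\}} g_{u-\phi}^p\,d\mu,
\]
which by Lemma~\ref{lem-char-1} is the fine subminimizer property for $u$.

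\emph{Main obstacle.} The one nontrivial verification is that $\phi_1\in\Np_0(V)$ when $\psi$ is only known to lie in $\Npploc(U)$. The point is to exploit that $V\Subset U$ is a \p-strict subset, so that $u,\psi\in\Np(V)$ by Remark~\ref{rmk-Npfineloc-def}. Rewriting $\phi_1=(u-\psi)-\bigl((u-\psi)-\phi\bigr)_\limplus$ exhibits $\phi_1$ as a bounded nonnegative function in $\Np(V)$ that vanishes q.e.\ outside $\{\phi>0\}$; since $\phi\in\Np_0(V)$ already vanishes q.e.\ outside $V$, extension by zero places $\phi_1$ in $\Np_0(V)\subset\Np_0(U)$, closing the pasting chain.
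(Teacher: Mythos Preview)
Your argument is correct and is, up to reorganization, the paper's own proof: the paper tests the obstacle solution against $v=\max\{u+\phi,\psi\}$ for $\phi\in\Np_0(V)$ of arbitrary sign and handles the super- and subminimizer inequalities in one stroke, whereas you treat them separately and (for the subminimizer part) test against $u-\phi_1=\max\{u-\phi,\psi\}$, which is the identical competitor after the substitution $\phi\mapsto-\phi$; your $\phi_2=(\phi-(u-\psi))_\limplus$ is precisely the test function the paper implicitly feeds into Lemma~\ref{lem-char-1} on the set $\{u+\phi<\psi\}$. The paper simply cites~\cite[Theorem~4.2]{BBnonopen} for existence and q.e.-uniqueness rather than rerunning the direct method.

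There is one slip in your ``Main obstacle'' paragraph. The claim that $\phi_1$ is bounded is false (neither $\phi$ nor $u-\psi$ need be bounded), and the stated implication ``$\phi_1\in\Np(V)$ and $\phi_1$ vanishes q.e.\ outside $\{\phi>0\}$, hence its zero extension lies in $\Np_0(V)$'' is not valid as written: vanishing q.e.\ on part of $V$ says nothing about extendability across $\bdy V$. The clean fix is the one the paper uses (tacitly, under ``easily verified''): since $u\ge\psi$ q.e., one has $0\le\phi_1\le\phi$ q.e.\ with $\phi_1\in\Np(V)$ and $\phi\in\Np_0(V)$, so Lemma~2.37 in~\cite{BBbook} gives $\phi_1\in\Np_0(V)\subset\Np_0(U)$, and then also $\phi_2=\phi-\phi_1\in\Np_0(U)$, closing the argument.
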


\begin{proof}
The existence and q.e.-uniqueness follow from
Theorem~4.2 in Bj\"orn--Bj\"orn~\cite{BBnonopen}.

To show that $u$ is a fine (super)minimizer in $U$,
let $V\Subset U$ be a finely open \p-strict subset and let $\phi \in \Np_0(V)$.
If
$\psi$ is not a fine subminimizer and $\psi \not \equiv -\infty$,
then we also require $\phi$ to be nonnegative.

It is easily verified that $v:=\max\{u+\phi,\psi\} \in \K_{\psi,f}(U)$.
Hence, as $u$ is a solution of the $\K_{\psi,f}(U)$-obstacle problem,
we get that
\begin{align} \label{eq-lem-obstacle}
\int_{U} g_{u}^p \, d\mu
&\le  \int_{U} g_{v}^p \, d\mu
= \int_{\{u+\phi\ge \psi\}} g_{u+\phi}^p \, d\mu
       + \int_{\{u+\phi<\psi\}} g_{\psi}^p \, d\mu \\
&\le \int_{\{u+\phi\ge \psi\}} g_{u+\phi}^p \, d\mu
       + \int_{\{u+\phi<\psi\}} g_{u+\phi}^p \, d\mu
= \int_{V} g_{u+\phi}^p \, d\mu
       + \int_{U\setm V} g_{u}^p \, d\mu, \nonumber
\end{align}
where the second inequality is justified by Lemma~\ref{lem-char-1}
if $\psi$ is a fine subminimizer, and is trivial otherwise as $u+\phi\ge\psi$
q.e.\ in $U$ in that case.

Since $u \in\Np(U)$, we see that the last integral in \eqref{eq-lem-obstacle}
is finite and
subtracting it from both sides of \eqref{eq-lem-obstacle} yields \eqref{eq-def-finesuper}
in Definition~\ref{def-finesuper} for the above choices of $V$ and
$\phi\in\Np_0(V)$.
As $V$ was arbitrary, it follows that $u$ is a fine superminimizer in $U$.
When $\phi$ is not required to be nonnegative, we
conclude that $u$ is a fine minimizer in~$U$.
\end{proof}

Note that there is a comparison principle
for solutions of obstacle problems, see Corollary~4.3 in
Bj\"orn--Bj\"orn~\cite{BBnonopen}.

\section{Fine continuity for solutions of the Dirichlet problem}
\label{sect-fine-cont}

\emph{In this section we assume that $U$ is
a nonempty finely open  set.
Except for Theorem~\ref{thm-Np0},
we also assume that $U$ is bounded and that $\Cp(X \setm U)>0$.}

\medskip

We do not know in general if fine minimizers have finely continuous
representatives.
However in this section we obtain sufficient conditions for the fine
continuity of solutions of the (fine) Dirichlet
problem, and deduce Theorem~\ref{thm-finecont-dir-intro}.
The proof of our key Lemma~\ref{lem-fine-cont-supermin}
below was inspired by the proof of Theorem~5.3
in Kilpel\"ainen--Mal\'y~\cite{KiMa92}.
As we study fine continuity in this section it
is natural to consider only finely open sets $U$.

With continuous boundary data, the solution of the Dirichlet
problem in an open set need not be continuous
at an irregular boundary point. However, the solution is finely continuous.
We demonstrate this by the following
example using Corollary~\ref{cor-fine-lim} below.

\begin{example}  \label{ex-only-fine-cont}
Consider, for example, a bounded open set $G \subset X$
with $\Cp(X \setm G)>0$ and a strongly irregular boundary point
$z \in \bdy G$, see Bj\"orn~\cite[p.~40]{ABclass} (or \cite[Definition~13.1]{BBbook}).

Then $X \setm G$ is thin at $z$, by
the sufficiency part of the Wiener criterion,
see Bj\"orn--MacManus--Shanmugalingam~\cite[Theorem~5.1]{BMS} and
J.~Bj\"orn~\cite[p.~370 and Corollary~3.11]{JB-pfine}
(or \cite[Theorem~11.24]{BBbook}).
Thus $U=G \cup \{z\}$ is finely open.
Moreover  $\Cp(\{z\})=0$, by the Kellogg property, see
  Bj\"orn--Bj\"orn--Shan\-mu\-ga\-lin\-gam~\cite[Theorem~3.9]{BBS}
  (or \cite[Theorem~10.5]{BBbook}).
Hence $\Cp(X \setm U)>0$.

Since $z$ is strongly irregular, it follows from Theorem~13.13 in~\cite{BBbook} that
the continuous solution $h$ of the
$\K_{-\infty,d}(G)$-obstacle problem, with $d(x)=d(x,z)$,
does not have a limit at $z$.
However, by Corollary~\ref{cor-fine-lim} below, $h$ does
have a fine limit.
\end{example}

We will need the following  auxiliary result,
which may also be of independent interest.
In what follows, the notions of $\finelim$, $\finelimsup$ and
$\fineliminf$ are defined using punctured fine neighbourhoods.
Note that since
\[
\cp(B(x,r) \setm\{x\},B(x,2r))= \cp(B(x,r),B(x,2r)),
\]
there are no isolated points in the fine topology,
i.e.\ no singleton sets are finely open.

\begin{thm}   \label{thm-Np0}
Let $U\subset V \subset X$ be finely open sets.
Assume that $u \in \Np(U)$ and extend it by $0$ to $V \setm U$.
Then the following are equivalent\/\textup{:}
\begin{enumerate}
\item \label{k-Np0}
$u \in \Np_0(U,V)$, i.e.\ $u\in\Np(V)$\textup{;}
\item \label{k-u-qcont}
$u$ is quasicontinuous in $V$\textup{;}
\item \label{k-finecont}
$u$ is finite q.e.\ and finely continuous q.e.\ in $V$\textup{;}
\item \label{k-curve}
$u$ is measurable, finite q.e.,
and $u\circ \ga$ is continuous for \p-almost every
curve $\ga : [0, l_\ga] \to V$\textup{;}
\item \label{k-f-lim=0}
$\displaystyle  \finelim_{U \ni y\to x} u(y)=0$
 for q.e.\ $x\in V \cap \bdyp U$.
\end{enumerate}
\end{thm}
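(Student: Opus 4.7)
My plan is to close the cycle \ref{k-Np0}$\Rightarrow$\ref{k-u-qcont}$\Leftrightarrow$\ref{k-finecont}$\Rightarrow$\ref{k-f-lim=0}$\Rightarrow$\ref{k-curve}$\Rightarrow$\ref{k-Np0}, with the last implication carrying the bulk of the technical work.

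Three arrows are essentially one-liners. Since $V$ is finely open, hence quasiopen by Theorem~\ref{thm-quasiopen-char}, \ref{k-Np0}$\Rightarrow$\ref{k-u-qcont} follows from Theorem~\ref{thm-newton-quasicont}\ref{k-1}, and \ref{k-u-qcont}$\Leftrightarrow$\ref{k-finecont} is Theorem~\ref{thm-newton-quasicont}\ref{k-3} applied to $V$. For \ref{k-finecont}$\Rightarrow$\ref{k-f-lim=0}: Lemma~\ref{lem-bdyp} combined with $U$ being finely open gives $\bdyp U\subset X\setm U$, so the extension forces $u(x)=0$ for every $x\in V\cap\bdyp U$, and $U$ is not thin at such $x$. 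Fine continuity of $u$ at q.e.\ such $x$ then immediately yields $\finelim_{U\ni y\to x}u(y)=u(x)=0$.

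For \ref{k-f-lim=0}$\Rightarrow$\ref{k-curve}, I would first re-establish fine continuity of $u$ at q.e.\ point of $V$ via the partition $V=U\cup(V\cap\bdyp U)\cup(V\setm\clUp)$. On $U$, Theorem~\ref{thm-newton-quasicont} applied in the quasiopen set $U$ gives fine continuity q.e.; on $V\setm\clUp$, $U$ is thin so $u\equiv 0$ in a fine neighbourhood; on $V\cap\bdyp U$, \ref{k-f-lim=0} together with $u=0$ on $X\setm U$ delivers fine continuity at q.e.\ point. Thus $u$ is quasicontinuous on $V$ by Theorem~\ref{thm-newton-quasicont}\ref{k-3}, and quasicontinuity in a quasiopen set is known to force continuity of $u\circ\ga$ for \p-a.e.\ curve in $V$ (see Bj\"orn--Bj\"orn--Mal\'y~\cite{BBMaly}).

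The main obstacle is \ref{k-curve}$\Rightarrow$\ref{k-Np0}. The plan is to take $g:=g_u$ extended by $0$ to $V\setm U$ and verify it is a \p-weak upper gradient of the extended $u$ on $V$. By \p-path openness of $U$ (Theorem~\ref{thm-quasiopen-char}\ref{b-p-path}), for \p-a.e.\ curve $\ga:[0,l_\ga]\to V$ the preimage $\ga^{-1}(U)$ is open and decomposes as a countable disjoint union $\bigcup_j(a_j,b_j)$. Since the minimal \p-weak upper gradients of $u$ with respect to $U$ and $V$ coincide a.e.\ on $U$ (as recalled in Section~\ref{sect-fine-strict}), $u\in\Np(U)$ yields $|u(\ga(s))-u(\ga(t))|\le\int_s^t g(\ga(\tau))\,d\tau$ for $a_j<s<t<b_j$ after discarding a \p-negligible family of curves in $V$ (a \p-negligible family in $U$ promotes to one in $V$ since any admissible density extends by $0$). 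Continuity of $u\circ\ga$ from \ref{k-curve}, together with $u\equiv 0$ on $[0,l_\ga]\setm\ga^{-1}(U)$, allows passage to the limits $s\to a_j^+$ and $t\to b_j^-$; summing the resulting inequalities over $j$ and using $u\equiv 0$ outside $\ga^{-1}(U)$ produces the global upper gradient inequality. Combined with $u,g\in L^p(V)$, we conclude $u\in\Np(V)$. The chief delicate point, which I expect to be the hardest bookkeeping, is ensuring that the \p-path openness, continuity on \p-a.e.\ curve, upper gradient inequality on \p-a.e.\ sub-curve in $U$, and avoidance of the relevant zero-capacity sets all hold simultaneously for the \emph{same} \p-a.e.\ family of curves in $V$, and that the endpoint limits can be taken uniformly along each such curve for the countably many components $(a_j,b_j)$.
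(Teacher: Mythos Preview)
Your proposal is correct and traces essentially the same cycle as the paper. The one-line implications \ref{k-Np0}$\Rightarrow$\ref{k-u-qcont}$\Leftrightarrow$\ref{k-finecont} and \ref{k-finecont}$\Leftrightarrow$\ref{k-f-lim=0} are argued exactly as the paper does, and your \ref{k-f-lim=0}$\Rightarrow$\ref{k-curve} really factors as \ref{k-f-lim=0}$\Rightarrow$\ref{k-finecont}$\Rightarrow$\ref{k-u-qcont}$\Rightarrow$\ref{k-curve} (the last step via \cite[Theorem~1.2]{BBMaly}), which is how the paper organizes it as well.

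The only substantive difference is in \ref{k-curve}$\Rightarrow$\ref{k-Np0}. You invoke the \p-path openness of $U$ to decompose $\ga^{-1}(U)$ into countably many open components and then sum. The paper's argument is shorter and does not use \p-path openness at all: after reducing (by splitting $\ga$ at a point outside $U$ if both endpoints lie in $U$) to the case $\ga(0)\in U$, $\ga(l_\ga)\notin U$, one sets $c=\inf\{t:\ga(t)\notin U\}$; continuity of $u\circ\ga$ forces $u(\ga(c))=0$, so
\[
|u(\ga(0))-u(\ga(l_\ga))| = |u(\ga(0))-u(\ga(c))|
= \lim_{\eps\to0}|u(\ga(0))-u(\ga(c-\eps))|
\le \int_{\ga|_{[0,c]}} g\,ds \le \int_\ga g\,ds.
\]
The subcurve issue you worry about is dispatched by \cite[Lemma~1.34\,(c)]{BBbook}. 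Your route works too, but note that in your sum only the component(s) containing the endpoints $0$ and $l_\ga$ contribute nontrivially (all interior components give $0\le\int g$), so it collapses to the same first-exit estimate; the paper's formulation simply bypasses the component bookkeeping and the simultaneous exceptional-family concern you raise at the end.
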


We will only need the equivalence \ref{k-Np0} $\eqv$ \ref{k-f-lim=0}
(when proving Lemma~\ref{lem-fine-cont-supermin}).
However, when deducing this equivalence we will rely on several earlier results,
which essentially requires us to obtain the full equivalence of
\ref{k-Np0}--\ref{k-f-lim=0}.

\begin{proof}
  \ref{k-Np0} $\imp$ \ref{k-u-qcont} $\eqv$ \ref{k-finecont}
  These implications hold by
    Theorem~\ref{thm-newton-quasicont}.

\ref{k-u-qcont} $\eqv$ \ref{k-curve}
This follows from Theorem~1.2 in Bj\"orn--Bj\"orn--Mal\'y~\cite{BBMaly}.

\ref{k-curve} $\imp$ \ref{k-Np0}
Let $g\in L^p(U)$
be a \p-weak upper gradient of $u$ in $U$, extended by zero to $V\setm U$.
Consider a curve~$\ga$ as in \ref{k-curve} such that
none of its subcurves in $U$ is exceptional in \eqref{ug-cond} for the pair $(u,g)$.
Lemma~1.34\,(c) in \cite{BBbook} implies that \p-almost every curve
  has this property.
If $\ga\subset U$ or $\ga\subset V\setminus U$, there is nothing to prove.
Hence by splitting $\ga$ into two parts, if necessary, and possibly
reversing the direction, we may assume that $x=\ga(0)\in U$ and $y=\ga(l_\ga)\notin U$.
Let
\(
c= \inf\{t: \ga(t)\notin U\}
\)
and $y_0=\ga(c)$. By continuity, $u(y_0)=0$, and hence
\[
|u(x)-u(y)| = |u(x)-u(y_0)| = \lim_{\eps\to 0} |u(x)-u\circ\ga(c-\eps)|
\le \int_{\ga|_{[0,c]}} g\,ds \le \int_{\ga} g\,ds.
\]
It follows that $g$ is a \p-weak upper gradient of  $u$ in $V$ and hence $u\in \Np(V)$.

\ref{k-finecont} $\imp$ \ref{k-f-lim=0}
  As $u$ is finely continuous q.e.\ and $u \equiv 0$ in $V \setm U$,
  \ref{k-f-lim=0} follows directly.

\ref{k-f-lim=0} $\imp$ \ref{k-finecont}
Since $u\in\Np(U)$, it is
finely continuous q.e.\ and finite q.e.\ in $U$,
by Theorem~\ref{thm-newton-quasicont}.
Thus $u$ is finite q.e.\ in $V$ and finely continuous
q.e.\ in $V\setm \bdyp U$.
As $u \equiv 0$ in $V \setm U$ and \ref{k-f-lim=0} holds,
$u$ is finely continuous q.e.\ in $V\cap\bdyp U$.
\end{proof}

We define for any function $u:U \to \eR$
the \emph{fine lsc-regularization} $u_*:\clUp \to \eR$ of $u$ as
\[
  u_*(x) = \fineliminf_{U\ni y \to x} u(y),\quad  \text{if } x\in \clUp,
\]
and the \emph{fine usc-regularization} $u^*:\clUp \to \eR$ of $u$ as
\[
  u^*(x) = \finelimsup_{U\ni y \to x} u(y),\quad \text{if } x\in \clUp.
\]
In this paper, we will only regularize Newtonian functions.
As these are finely continuous q.e., we have $u=u_*=u^*$
  q.e.\ in $U$.
We say that $u$ is \emph{finely lsc-regularized} if $u=u_*$
in $U$ and \emph{finely usc-regularized} if $u=u^*$ in $U$.
Note that $u_*$ (resp.\ $u^*$) is finely lsc-regularized
(resp.\ finely usc-regularized) in $U$.
Recall also the characterization of $\bdyp U$ in Lemma~\ref{lem-bdyp}.

\begin{lem} \label{lem-fine-cont-supermin}
Let $z \in U$, $B=B(z,r)$, $f \in \Np(U)$ and let $u$ be a fine
superminimizer in $B\cap U$ such that $u-f\in\Np_0(B \cap U,B)$.
Assume that $c\in \R$ is such that
\[
f_* \ge c \quad \text{q.e.\ in } B\cap \bdyp U.
\]
If $u_*(z)< c$, then $u_*$ is finely continuous at $z$.
\end{lem}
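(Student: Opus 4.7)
The plan is to truncate $u$ from above by $c$, paste with the constant $c$ outside $W:=B\cap U$, and reduce to classical \p-superharmonic theory on the open ball $B$. Put $v:=\min\{u,c\}$; then Corollary~\ref{cor-min-supermin} makes $v$ a fine superminimizer in $W$.

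First I would show that the extension $\bar v$ of $v$ by the constant $c$ to all of $B$ lies in $\Np(B)$, or equivalently that $w:=c-v\ge 0$, extended by $0$ to $B\setm W$, lies in $\Np_0(W,B)$. Since $u-f\in \Np_0(W,B)$, Theorem~\ref{thm-Np0} supplies $\finelim_{W\ni y\to x}(u-f)(y)=0$ for q.e.\ $x\in B\cap\bdyp W$. Because $W$ is finely open and $B$ is an open fine neighbourhood of each of its points, one checks directly that $B\cap\bdyp W\subset B\cap\bdyp U$ and that the fine liminf of $f$ taken from within $W$ agrees on this set with $f_*$. Combined with the hypothesis $f_*\ge c$ q.e.\ on $B\cap\bdyp U$, this yields $\fineliminf_{W\ni y\to x}u(y)\ge c$, and hence $\finelim_{W\ni y\to x}w(y)=0$, at q.e.\ $x\in B\cap\bdyp W$. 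Invoking Theorem~\ref{thm-Np0} in the opposite direction produces $w\in \Np_0(W,B)$, so $\bar v\in \Np(B)$.

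With $\bar v\in \Np(B)\subset \Npploc(B)$ in hand, the pasting lemma (Lemma~\ref{lem-paste-supermin}) applied to $U_1=W$, $U_2=B$, $u_1=v$ and the constant fine superminimizer $u_2\equiv c$ shows that $\bar v$ is a fine superminimizer in the open set $B$. By Corollary~\ref{cor-fine-min} it is therefore a classical superminimizer in $B$, and standard theory on open sets (\cite[Chapter~9]{BBbook}) provides a \p-superharmonic lsc representative $\bar v_{\rm lsc}$ of $\bar v$, which is finely continuous in $B$. Since fine limits disregard sets of capacity zero and $\bar v=\bar v_{\rm lsc}$ q.e., the fine lsc-regularization $\bar v_*$ coincides with $\bar v_{\rm lsc}$ everywhere in $B$; in particular $\bar v_*$ is finely continuous at $z$.

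It remains to transfer this fine continuity from $\bar v_*$ back to $u_*$. Since $W$ is a fine neighbourhood of $z$, the set $B\setm W$ is thin at $z$, and therefore $\bar v_*(z)=v_*(z)$. A short computation using monotonicity of $t\mapsto\min\{t,c\}$ gives $v_*=\min\{u_*,c\}$, which together with the hypothesis $u_*(z)<c$ yields $\bar v_*(z)=u_*(z)<c$. By fine continuity of $\bar v_*$ at $z$ there is a fine neighbourhood $N\subset W$ of $z$ on which $\bar v_*<c$, and on $N$ one then has $u_*=v_*=\bar v_*$, so $u_*$ inherits the fine continuity of $\bar v_*$ at $z$. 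The main obstacle is the second step: one must carefully relate $\bdyp W$ to $\bdyp U$ and combine the two hypotheses $u-f\in \Np_0(W,B)$ and $f_*\ge c$ on $B\cap\bdyp U$ via Theorem~\ref{thm-Np0} (in both directions) to pin down the correct boundary behaviour of~$v$.
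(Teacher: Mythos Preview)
Your argument is correct and follows essentially the same route as the paper: truncate and extend by $c$, use Theorem~\ref{thm-Np0} in both directions to place the extension in $\Np(B)$, paste via Lemma~\ref{lem-paste-supermin}, pass to a superharmonic (hence finely continuous) representative on the open ball, and read off fine continuity of $u_*$ at $z$. Your preliminary use of Corollary~\ref{cor-min-supermin} is harmless but unnecessary, since the pasting lemma already forms the minimum; and your explicit verification that $B\cap\bdyp W=B\cap\bdyp U$ (with matching fine limits) is a point the paper leaves implicit.
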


\begin{proof} Assume that
\begin{equation} \label{eq-u<c}
u_*(z)=\fineliminf_{U\ni y \to z} u(x) < c.
\end{equation}
We want to apply the pasting Lemma~\ref{lem-paste-supermin}
to the fine superminimizers $c$ and $u$ in the
finely open sets $B$ and $B\cap U$, respectively.
We therefore show that the function
\[
u_c =\begin{cases}
        c & \text{in\/ } B \setm U, \\
        \min\{u,c\} & \text{in\/ } B\cap U
	\end{cases}
\]
belongs to $\Np(B)$.
This will follow
if we can show that
$u_c-c\in\Np_0(B\cap U,B)$,
which we will do using characterization~\ref{k-f-lim=0} in Theorem~\ref{thm-Np0}.
For this purpose, it suffices to show that
\begin{equation} \label{eq-uc}
\finelim_{U\ni y\to x} \min\{u(y),c\} = c \quad \text{for q.e.\ }x\in B\cap\bdyp U.
\end{equation}
Clearly, $\finelimsup_{U\ni y\to x} \min\{u(y),c\} \le c$ everywhere.
The fact that $u-f\in\Np_0(B \cap U,B)$, together with
Theorem~\ref{thm-Np0}, shows
that for q.e.\ $x\in B\cap\bdyp U$,
\[
\fineliminf_{U\ni y\to x} u(y)
\ge \finelim_{U\ni y\to x} (u-f)(y) + \fineliminf_{U\ni y\to x} f(y)
= f_*(x) \ge c.
\]
Hence, \eqref{eq-uc} holds and $u_c \in \Np(B)$.
Therefore, by Lemma~\ref{lem-paste-supermin}, 
$u_c$ is a fine superminimizer in $B$.
As $B$ is open, $u_c$ is a superminimizer in $B$,
by Corollary~\ref{cor-fine-min}.
It follows from Proposition~7.4 in Kinnunen--Martio~\cite{KiMa02}
(or \cite[Proposition~9.4]{BBbook}), that $u_c$ has a
superharmonic representative $v$ such that $v=u_c$ q.e.\ in $B$.
Thus, $v$ is finely continuous in $B$, by
Bj\"orn~\cite[Theorem~4.4]{JB-pfine} or
Korte~\cite[Theorem~4.3]{korte08} (or \cite[Theorem~11.38]{BBbook}).
As $v= u_c$ q.e.\ in $B \cap U$,
we conclude from \eqref{eq-u<c} that
\[
    v(z) = \fineliminf_{x \to z} u_c(x)
     =\fineliminf_{x \to z} u(x) = u_*(z) < c.
\]
Since $v$ is finely continuous at $z$,
there is a fine neighbourhood $V$ of $z$ contained in $B\cap U$
so that $\sup_V v<c$. Hence also
\[
v(z)= \finelimsup_{x \to z} v(x) = \finelimsup_{x \to z} u_c(x)
=\finelimsup_{x \to z} u(x) = u^*(z).
\qedhere
\]
\end{proof}

In what follows, the $\cpliminf$, $\cplimsup$ and $\cplim$ are taken with respect
to the metric topology from $X$ and up to sets of zero capacity
in punctured neighbourhoods.
For instance, for a function $v$ defined in a set $E$,
\begin{align*}
\cpliminf_{E\ni x\to z} v(x)
& :=\lim_{r \to 0} \cpessinfalt_{E \cap (B(z,r) \setm \{z\})} v \\
& := \lim_{r \to 0} \sup \{ k: \Cp(\{ x\in E \cap (B(z,r) \setm \{z\}): v(x)<k \})=0 \}.
\end{align*}
In particular,
\[
 \cpliminf_{E\ni x\to z} v(x)= \infty
 \quad \text{if } \Cp(E \cap (B(z,r) \setm \{z\}))=0 \text{ for some $r>0$}.
\]

\begin{cor}  \label{cor-obst-prob-liminf}
  Let $z \in U$, $f\in\Np(U)$
  and let $u$ be a fine superminimizer in $U$ such that
$u-f\in\Np_0(U)$.
If
\begin{equation} \label{eq-cpessliminf}
  u_*(z) < \cpliminf_{U\ni x\to z} f(x)
  \quad \text{or} \quad
u_*(z) < \cpliminf_{\bdyp U\ni x\to z} f_*(x),
\end{equation}
then $u_*$ is finely continuous at $z$.
\end{cor}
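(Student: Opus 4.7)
The plan is to reduce Corollary~\ref{cor-obst-prob-liminf} to the key Lemma~\ref{lem-fine-cont-supermin} by producing a radius $r>0$ and a constant $c\in\R$ such that $u_*(z)<c$ and $f_*\ge c$ q.e.\ on $B(z,r)\cap\bdyp U$, and then verifying the two background hypotheses of the lemma. Those background hypotheses come essentially for free: every finely open \p-strict subset $V\Subset B\cap U$ is also a finely open \p-strict subset of $U$ (a witnessing cut-off in $\Np_0(B\cap U)$ is already the restriction of a function in $\Np(X)$ that vanishes on $X\setm(B\cap U)\supset X\setm U$, hence lies in $\Np_0(U)$), so $u$ remains a fine superminimizer on $B\cap U$; and since $u-f\in\Np_0(U)$, its zero-extension to $X$ lies in $\Np(X)$ and vanishes on $X\setm U$, so its restriction to $B$ witnesses $u-f\in\Np_0(B\cap U,B)$.

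Pick $c$ strictly between $u_*(z)$ and whichever alternative of \eqref{eq-cpessliminf} is assumed. In the second alternative, the definition of $\cpliminf$ immediately produces $r>0$ with $f_*\ge c$ q.e.\ on $\bdyp U\cap(B(z,r)\setm\{z\})$. In the first alternative, the same definition yields $r>0$ and a capacity-zero set $N$ with $f\ge c$ on $(U\cap(B(z,r)\setm\{z\}))\setm N$. To promote this to a statement about $f_*$, fix any $x\in\bdyp U\cap B(z,r)$ with $x\ne z$. Because $\{z\}$ is thin at $x$ (singletons have no fine interior, as noted before Theorem~\ref{thm-Np0}), the punctured ball $B(z,r)\setm\{z\}$ is a fine neighbourhood of $x$, so $f_*(x)$ may be computed as the fine liminf of $f$ along $U\cap(B(z,r)\setm\{z\})$; since fine limits ignore zero-capacity sets, this forces $f_*(x)\ge c$. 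In either alternative, Lemma~\ref{lem-bdyp} applied to the finely open set $U\ni z$ gives $z\notin\bdyp U$, so the possibly exceptional point $z$ can be discarded and $f_*\ge c$ q.e.\ on $B(z,r)\cap\bdyp U$. Lemma~\ref{lem-fine-cont-supermin} then applies and delivers the fine continuity of $u_*$ at $z$.

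The only nontrivial step I anticipate is the passage in the first alternative from the capacity-essential inequality $f\ge c$ on a punctured fine neighbourhood to the pointwise bound $f_*\ge c$ at each point of $\bdyp U\cap B(z,r)\setm\{z\}$. Both ingredients it needs — that punctured balls are fine neighbourhoods of their non-center points and that fine limits disregard sets of zero capacity — are recorded in Section~\ref{sect-fine-top}, so the reduction should be short once these are invoked correctly.
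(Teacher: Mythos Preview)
Your proposal is correct and follows essentially the same route as the paper: reduce to Lemma~\ref{lem-fine-cont-supermin} by producing $c$ and $r$ with $f_*\ge c$ q.e.\ on $B(z,r)\cap\bdyp U$. The paper is terser: it first records the single inequality $\cpliminf_{U\ni x\to z} f(x) \le \cpliminf_{\bdyp U\ni x\to z} f_*(x)$ (which is exactly what your treatment of the first alternative establishes pointwise) and thereby reduces both cases to the second, then invokes the lemma without spelling out the background hypotheses you verify. One minor simplification: for $x\ne z$ you do not need thinness of $\{z\}$ at $x$---a small metric ball around $x$ already avoids $z$, so $B(z,r)\setm\{z\}$ is an ordinary open neighbourhood of $x$.
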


\begin{proof} It follows directly from the definition of $f_*$ that
\[
  \cpliminf_{U\ni x\to z} f(x) \le \cpliminf_{\bdyp U\ni x\to z} f_*(x),
\]
and thus we can without loss of generality assume
the latter inequality in~\eqref{eq-cpessliminf}.
We can then find $c>u_*(z)$ and $B=B(z,r)$
such that
\[
f_*(x) \ge c
\quad \text{for all } x\in B\cap\bdyp U.
\]
Lemma~\ref{lem-fine-cont-supermin} concludes the proof.
\end{proof}

Note that if $f \in \Np(X)$ in Corollary~\ref{cor-obst-prob-liminf},
then $f=f_*=f^*$ q.e.\ on $\clUp$ and thus
\begin{equation}   \label{eq-esslim-bdypU}
  \cpliminf_{\bdyp U\ni x\to z} f_*(x)
  = \cpliminf_{\bdyp U\ni x\to z} f(x)
\end{equation}
in \eqref{eq-cpessliminf}.

\begin{thm}  \label{thm-fine-cont-if-Cp-lim}
Let $f \in \Np(U)$. Then the finely lsc-regularized solution $h_*$
of the $\K_{-\infty,f}(U)$-obstacle problem is finely continuous at each
$z\in U$ which satisfies one of the following conditions\/\textup{:}
\begin{enumerate}
\item \label{it-cplim-ex}
  The limit
$\displaystyle\cplim_{U\ni x\to z} f(x)$ exists.
\item \label{it-cplim-ex-2}
  The equality
  $\displaystyle\cplim_{\bdyp U\ni x\to z} f_*(x)= \cplim_{\bdyp U\ni x\to z} f^*(x)$
  holds.
\item \label{it-Cp=0}
There  exists $r>0$ such that $\Cp(B(z,r)\cap \bdyp U)=0$. 
\end{enumerate}
\end{thm}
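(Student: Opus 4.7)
The plan is to establish the theorem by applying Corollary~\ref{cor-obst-prob-liminf} symmetrically: first to the fine minimizer $h$ itself (which is a fine superminimizer with $h-f\in\Np_0(U)$ by Theorem~\ref{thm-obstacle}), and second to $-h$, which is also a fine superminimizer with boundary datum $-f\in\Np(U)$ and $(-h)-(-f)\in\Np_0(U)$. The crucial preliminary remark is that if $(-h)_*=-h^*$ is finely continuous at $z$, then so is $h_*$. Indeed, since $h=h_*=h^*$ q.e.\ in $U$ and fine limits ignore sets of capacity zero, fine continuity of $-h^*$ at $z$ forces $\finelim_{U\ni y\to z}h(y)=h^*(z)$, whence $h_*(z)=\fineliminf h = h^*(z)$ and $\finelim h_* = h_*(z)$.

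For \ref{it-cplim-ex}, let $L=\cplim_{U\ni x\to z}f(x)$. If $h_*(z)<L$, Corollary~\ref{cor-obst-prob-liminf} (first alternative) applied to $h$ yields fine continuity of $h_*$ at $z$. If $h^*(z)>L$, then $(-h)_*(z)=-h^*(z)<-L=\cpliminf_{U\ni x\to z}(-f)(x)$, so the Corollary applied to $-h$ gives fine continuity of $-h^*$, and hence of $h_*$, at $z$ by the remark above. In the remaining case, $h_*(z)\ge L\ge h^*(z)$ forces $h_*(z)=h^*(z)=L$, so the fine limit of $h$ exists at $z$ and equals $h_*(z)$, which is again fine continuity of $h_*$. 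Condition~\ref{it-cplim-ex-2} is handled in the same way using the second alternative in Corollary~\ref{cor-obst-prob-liminf}, together with the identity $(-f)_*=-f^*$, which yields $\cpliminf_{\bdyp U\ni x\to z}(-f)_*=-\cplimsup_{\bdyp U\ni x\to z}f^*=-L$.

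For \ref{it-Cp=0}, the convention recorded after the definition of $\cpliminf$ gives $\cpliminf_{\bdyp U\ni x\to z}f_*(x)=\cpliminf_{\bdyp U\ni x\to z}(-f)_*(x)=\infty$. If $h_*(z)<\infty$, Corollary~\ref{cor-obst-prob-liminf} applied to $h$ already yields the conclusion; otherwise $h_*(z)=h^*(z)=\infty$, hence $(-h)_*(z)=-\infty<\infty$ and the Corollary applied to $-h$ again produces fine continuity of $-h^*$, and thus of $h_*$, now with value $\infty$.

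The main subtlety I anticipate is the sub-case $h_*(z)=h^*(z)=L$ in \ref{it-cplim-ex}--\ref{it-cplim-ex-2}, where neither direction of Corollary~\ref{cor-obst-prob-liminf} applies strictly because no strict inequality is available. Fortunately, this squeezed equality is itself equivalent to existence of the fine limit of $h$ at $z$ with value $h_*(z)$, which is precisely fine continuity of $h_*$ at $z$. The symmetric treatment of $h$ and $-h$ is the essential device that lets us cover the upper and lower tails of $h$ simultaneously, and thus exhaust the three possible orderings of $h_*(z)$ and $h^*(z)$ relative to $L$.
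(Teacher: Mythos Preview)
Your proof is correct and follows essentially the same approach as the paper: both arguments rest on applying Corollary~\ref{cor-obst-prob-liminf} symmetrically to $h$ and to $-h$, and both reduce fine continuity of $h_*$ at $z$ to the equality $h_*(z)=h^*(z)$. The paper phrases \ref{it-cplim-ex} and \ref{it-cplim-ex-2} as a proof by contradiction (assume $h_*(z)<h^*(z)$ and obtain $h_*(z)\ge L\ge h^*(z)$ from the contrapositive of the Corollary), whereas you perform the equivalent explicit trichotomy; and in \ref{it-Cp=0} the paper simply observes that $h_*(z)=\infty$ forces $h^*(z)=\infty$ and hence fine continuity directly, while your detour through $(-h)_*(z)=-\infty$ and a further application of the Corollary is unnecessary but harmless.
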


\begin{proof}
\ref{it-cplim-ex}
Assume that $h_*$ (and hence also $h^*$) is not finely continuous at $z$,
  i.e.\ that $h_*(z) < h^*(z)$.
Then Corollary~\ref{cor-obst-prob-liminf}, applied to both
  $h_*$ and $-h^*$,
together with the assumption~\ref{it-cplim-ex} shows that

\[
h_*(z) \ge \cplim_{U\ni x\to z} f(x)
=-  \cplim_{U\ni x\to z} (-f)(x)
 \ge h^*(z),
\]
a contradiction. Hence $h_*$ is finely continuous at $z$.

\ref{it-cplim-ex-2}
Assume, as in \ref{it-cplim-ex}, that $h_*$
is not finely continuous at $z$.
This time, Corollary~\ref{cor-obst-prob-liminf}
shows that
\[
  h_*(z) \ge
 \cplim_{\bdyp U\ni x\to z} f_*(x)
  = -\cplim_{\bdyp U\ni x\to z} (-f^*)(x)
  \ge h^*(z),
\]
a contradiction.
Hence $h_*$ is finely continuous at $z$.

\ref{it-Cp=0}
If $h_*(z)=\infty$, then also $h^*(z)=\infty$ and
$h$ is finely continuous at $z$. Otherwise,
\[
h_*(z) < \infty= \cpliminf_{\bdyp U\ni x\to z} f_*(x),
\]
and the conclusion follows from Corollary~\ref{cor-obst-prob-liminf}.
\end{proof}

We can now prove Theorem~\ref{thm-finecont-dir-intro}.

\begin{proof}[Proof of Theorem~\ref{thm-finecont-dir-intro}]
Let $h_*$ be the finely lsc-regularized solution of the Dirichlet problem,
i.e.\ of the $\K_{-\infty,f}(U)$-obstacle problem,
and let $z \in U$.
If $f \in C(U) \cap \Np(U)$,
then condition~\ref{it-cplim-ex} in Theorem~\ref{thm-fine-cont-if-Cp-lim}
holds. Recall that here $f$ is assumed continuous with values in $\eR$.

On the other hand, if $f \in \Np(X)$, then
either condition~\ref{it-Cp=0} in Theorem~\ref{thm-fine-cont-if-Cp-lim}
is fulfilled, or \eqref{eq-esslim-bdypU} and the continuity of $f$ on
$\clUp \cap \bdy U \supset \bdyp U \cup (U\cap \itoverline{\bdyp U})$
yield
\[
\cplim_{\bdyp U\ni x\to z} f_*(x)
= \cplim_{\bdyp U\ni x\to z} f(x)
=
\cplim_{\bdyp U\ni x\to z} f^*(x),
\]
i.e.\ condition~\ref{it-cplim-ex-2} in Theorem~\ref{thm-fine-cont-if-Cp-lim}
holds.

Thus, in all cases, $h_*$ is finely continuous at $z$ by
Theorem~\ref{thm-fine-cont-if-Cp-lim}.
\end{proof}

As an application of Theorem~\ref{thm-fine-cont-if-Cp-lim} we obtain the
following result. Note that $V$ is finely open since it is the intersection
of the finely open set $U$ and the open set $X \setm \{z\}$.
Moreover, $\bdyp V = \bdyp U \cup\{z\}$ as there are no finely isolated
points, see Lemma~\ref{lem-bdyp}.

\begin{thm} \label{thm-fine-lim}
Let $z \in U$ and $V=U \setm \{z\}$. Let $f \in \Np(V)$ and
let $h_{V}$ be a solution of the $\K_{-\infty,f}(V)$-obstacle problem.
Assume that one of the following holds\/\textup{:}
\begin{enumerate}
\item \label{it-Cp(z)>0}
  $\Cp(\{z\})>0$ and
$\displaystyle f(z):=\finelim_{V\ni x\to z} f(x)$ exists,
  which in particular holds if $f \in \Np(U)$.
\item \label{it-f-cont}
  $\Cp(\{z\})=0$ and
  $\displaystyle f(z):=\cplim_{V\ni x\to z} f(x)$ exists.
\end{enumerate}
Then the fine limit
\begin{equation}   \label{eq-ex-finelim-h}
\finelim_{V\ni x\to z} h_{V}(x)
\quad \text{exists.}
\end{equation}
\end{thm}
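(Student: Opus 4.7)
The plan is to treat the two cases separately and in both cases reduce to machinery already developed in the paper. For case~\ref{it-f-cont} the conclusion will be a direct appeal to Theorem~\ref{thm-fine-cont-if-Cp-lim}, while case~\ref{it-Cp(z)>0} exploits the fact that the positive capacity of $\{z\}$ converts a quasi-everywhere statement into a pointwise statement at $z$.

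Case~\ref{it-f-cont}: The set $V=U\setm\{z\}$ is finely open and bounded, and $\Cp(X\setm V)\ge \Cp(X\setm U)>0$, so the solution $h_V$ is provided by Theorem~\ref{thm-obstacle}. The hypothesis that $\cplim_{V\ni x\to z} f(x)$ exists is exactly condition~\ref{it-cplim-ex} of Theorem~\ref{thm-fine-cont-if-Cp-lim}, applied with $V$ in the role of~$U$. Therefore $h_{V,*}$ is finely continuous at $z$, and \eqref{eq-ex-finelim-h} follows.

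Case~\ref{it-Cp(z)>0}: I first verify the parenthetical remark. If $f\in\Np(U)$, then $f$ is finely continuous q.e.\ in $U$ by Theorem~\ref{thm-newton-quasicont}\ref{k-3}; the exceptional set has capacity zero, so it cannot contain $z$ (whose capacity is positive), hence $f$ is finely continuous at $z$ and $\finelim_{V\ni x\to z} f(x)=f(z)$. Now let $c:=\finelim_{V\ni x\to z} f(x)$ and set $\phi:=h_V-f$. By the definition of the obstacle problem, $\phi\in\Np_0(V)$, so extending $\phi$ by zero to $X\setm V$ produces $\phi\in\Np(X)$. Applying the implication~\ref{k-Np0}$\Rightarrow$\ref{k-f-lim=0} of Theorem~\ref{thm-Np0} to the inclusion $V\subset X$ (both being finely open) yields
\[
\finelim_{V\ni y\to x} \phi(y)=0 \quad\text{for q.e.\ } x\in X\cap\bdyp V.
\]
Since $\bdyp V=\bdyp U\cup\{z\}$ and $\Cp(\{z\})>0$, the point $z$ cannot lie in the capacity-zero exceptional set, so this fine limit holds pointwise at $z$. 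Combining with $\finelim_{V\ni y\to z} f(y)=c$ gives $\finelim_{V\ni y\to z} h_V(y)=c$, which is~\eqref{eq-ex-finelim-h}.

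The principal subtlety is the transition from q.e.\ to pointwise assertions at the single point $z$. The tools furnished earlier in the paper (quasicontinuity and Theorem~\ref{thm-Np0}) naturally deliver fine limits on the fine boundary only quasi-everywhere, and in case~\ref{it-Cp(z)>0} the positivity of $\Cp(\{z\})$ is precisely what allows us to read off a value at~$z$ itself. In case~\ref{it-f-cont} this route is closed, and it is exactly for this reason that the hypothesis must be strengthened from the existence of a fine limit to the existence of a $\cplim$, so that Theorem~\ref{thm-fine-cont-if-Cp-lim} can be applied directly.
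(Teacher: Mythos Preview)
Your treatment of case~\ref{it-Cp(z)>0} is correct and matches the paper's argument exactly (you have even made the ``q.e.\ implies pointwise at $z$'' step explicit, which the paper leaves implicit).

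However, case~\ref{it-f-cont} has a genuine gap. Theorem~\ref{thm-fine-cont-if-Cp-lim} (applied with $V$ in the role of $U$, as you propose) only asserts fine continuity of $h_{V,*}$ at points $z\in V$. But $z\notin V=U\setm\{z\}$, so the theorem says nothing about $z$. The existence of $\cplim_{V\ni x\to z} f(x)$ is indeed the formal analogue of condition~\ref{it-cplim-ex}, but the standing hypothesis ``$z\in U$'' in Theorem~\ref{thm-fine-cont-if-Cp-lim} is not satisfied in your application.

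The paper repairs this by passing from $V$ back to $U$: since $\Cp(\{z\})=0$, Proposition~1.48 in~\cite{BBbook} shows that $f$ and $h_V$ (extended arbitrarily at $z$) lie in $\Np(U)$, and $\Np_0(V)\subset\Np_0(U)$. Hence $h_V-f\in\Np_0(U)$, and by the q.e.-uniqueness in Theorem~\ref{thm-obstacle}, $h_V$ agrees q.e.\ with a solution $h_U$ of the $\K_{-\infty,f}(U)$-obstacle problem. Now $z\in U$, so Theorem~\ref{thm-fine-cont-if-Cp-lim} applies to $h_U$ and gives the fine limit of $h_V$ at $z$. This extra step---extending to $U$ and invoking uniqueness---is the missing idea in your argument.
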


\begin{proof}
\ref{it-Cp(z)>0} Note first that if $f\in\Np(U)$, then $f$ is finely continuous
q.e.\ in $U$ and thus $f(z)=\finelim_{V\ni x\to z} f(x)$.
Since $h_{V}-f \in\Np_0(V)$, Theorem~\ref{thm-Np0} implies that
\[
\finelim_{V\ni x\to z} (h_{V}-f)(x)=0
\]
and \eqref{eq-ex-finelim-h} follows.

\ref{it-f-cont}
In this case, $f,h_V \in \Np(U)$, by Proposition~1.48 in~\cite{BBbook}
(where $h_V(z)$ is defined arbitrarily).
Moreover, $h_{V}-f \in \Np_0(V) \subset \Np_0(U)$.
Let $h_{U}$ be a
solution of the $\K_{-\infty,f}(U)$-obstacle problem.
Then $h_{U}-f \in \Np_0(U)$ and by
the uniqueness part of Theorem~\ref{thm-obstacle}, we conclude that
$h_{U}=h_{V}$ q.e.\ in $U$.
Theorem~\ref{thm-fine-cont-if-Cp-lim} and the assumption
\ref{it-f-cont} imply that  $h_{U}$ is finely continuous at $z$,
and thus by Theorem~\ref{thm-Np0},
\[
\finelim_{V\ni x\to z}h_{V}(z)=\finelim_{V\ni x\to z} h_{U}(x)=h_{U}(z).
\qedhere
\]
\end{proof}

\begin{cor} \label{cor-fine-lim}
Let $G$ be a nonempty bounded open set with $\Cp(X \setm G)>0$
and let $z \in \bdy G$.
Let $f \in \Np(G)$ and assume that the limit
\[
f(z):=\lim_{G \ni x\to z}f(x)\quad \text{exists}.
\]
Let $h$ be a solution of the $\K_{-\infty,f}(G)$-obstacle problem.
Then the fine limit
\[
\finelim_{G\ni x\to z} h(x)\quad \text{exists}.
\]
\end{cor}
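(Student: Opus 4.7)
The plan is to reduce to Theorem~\ref{thm-fine-lim} via the natural candidate $U := G\cup\{z\}$. Since $G$ is open, every point of $G$ is automatically a fine interior point of $U$, so whether $U$ itself is finely open depends entirely on whether $X\setm U = (X\setm G)\setm\{z\}$ is thin at $z$. The argument therefore splits into two cases.

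Suppose first that $X\setm G$ is thin at $z$. Then $(X\setm G)\setm\{z\}$, being a subset, is also thin at $z$, so $U$ is finely open and $V := U\setm\{z\} = G$. The hypothesis that the metric limit $\lim_{G\ni x\to z}f(x)$ exists forces both $\finelim_{V\ni x\to z}f(x)$ and $\cplim_{V\ni x\to z}f(x)$ to exist and to equal this value, since metric balls are fine neighbourhoods and a metric limit admits no exceptional set. Applying Theorem~\ref{thm-fine-lim} with whichever of hypotheses~\ref{it-Cp(z)>0} or~\ref{it-f-cont} matches the value of $\Cp(\{z\})$ then produces the desired $\finelim_{G\ni x\to z}h(x)$.

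Suppose instead that $X\setm G$ is not thin at $z$. Then by the sufficiency direction of the Wiener criterion in metric spaces (as invoked in Example~\ref{ex-only-fine-cont}, see \cite{BMS} and~\cite{JB-pfine}), $z$ is a regular boundary point of $G$. Since $G$ is open, Corollary~\ref{cor-fine-min} identifies $h$ with an ordinary minimizer on $G$, which admits a continuous \p-harmonic representative. Extending $f$ by $f(z) := \lim_{G\ni x\to z}f(x)$ makes $f$ continuous at $z$ from inside $G$, and the standard barrier argument for the Dirichlet problem at a regular boundary point (cf.\ Theorem~13.13 in~\cite{BBbook}) then yields $\lim_{G\ni x\to z}h(x) = f(z)$ in the metric topology; such a metric limit is a fortiori a fine limit.

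The harder step is Case~2, which relies on boundary continuity of the Dirichlet solution at regular points with $\Np$ boundary data having a continuous extension to $z$---something not developed in the present section. However, this regularity is a standard consequence of the sufficiency part of the Wiener criterion together with the Perron/barrier machinery established in the references the paper already invokes, so the whole proof reduces to assembling these known facts with the straightforward Case~1 reduction to Theorem~\ref{thm-fine-lim}.
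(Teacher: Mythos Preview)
Your argument is correct and mirrors the paper's proof, which splits instead on whether $z$ is regular (invoking \cite[Theorem~6.11]{BB}, i.e.\ \cite[Theorem~11.11]{BBbook}, not Theorem~13.13, for the metric limit) or irregular (in which case thinness of $X\setm G$ and $\Cp(\{z\})=0$ follow from the sufficiency part of the Wiener criterion and the Kellogg property, and Theorem~\ref{thm-fine-lim} applies). One remark: in your Case~1 the subcase $\Cp(\{z\})>0$ is actually vacuous---a set containing a point of positive capacity is never thin there---and recognizing that $\Cp(\{z\})=0$ in this case is precisely what guarantees the standing hypothesis $\Cp(X\setm U)>0$ needed for Theorem~\ref{thm-fine-lim}, which you left unverified.
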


\begin{proof} If $z$ is a regular point of $\partial G$, then
even the metric limit
\[
\lim_{G\ni x\to z} h(x)=f(z)
\]
exists, by (a) $\imp$ (g) in Theorem~6.11 in Bj\"orn--Bj\"orn~\cite{BB}
(or \cite[(a) $\imp$ (i) in Theorem~11.11]{BBbook}.
On the other hand, if $z$ is an
irregular boundary point of $G$, then $X\setminus G$ is thin at $z$ by
the sufficiency part of the Wiener criterion,
and $\Cp(\{z\})=0$ by the Kellogg property
(see Example~\ref{ex-only-fine-cont} for references).
Hence $G\cup \{z\}$ is finely open, $f\in \Np(G\cup\{z\})$, and the claim follows from
Theorem~\ref{thm-fine-lim}.
\end{proof}

In terms of Perron solutions on open sets, Corollary~\ref{cor-fine-lim}
yields the following consequence.
Here $Pf$ denotes the the Perron solution in $G$ with boundary data $f$,
see \cite[Section~10.3]{BBbook}. Recall that if $f \in C(\bdy G)$ then $f$ is resolutive
and thus $Pf$ exists, by Theorem~6.1 in
    Bj\"orn--Bj\"orn--Shan\-mu\-ga\-lin\-gam~\cite{BBS2}
(or \cite[Theorem~10.22]{BBbook}).

\begin{cor} \label{cor-fine-lim-Perron}
Let $G$ be a nonempty bounded open set with $\Cp(X \setm G)>0$
and let $f \in C(\bdy G)$.
Then the fine limit
\begin{equation} \label{eq-Perron}
    \finelim_{G\ni x\to z} Pf(x)
\end{equation}
exists for all $z\in\bdy G$.
\end{cor}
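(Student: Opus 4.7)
The plan is to split by whether $z\in\bdy G$ is a regular or irregular boundary point for $G$, and in the irregular case to reduce to Corollary~\ref{cor-fine-lim} via uniform Lipschitz approximation of~$f$.

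If $z$ is regular for $G$, then by \cite[(a)\imp(i) in Theorem~11.11]{BBbook} the ordinary metric limit $\lim_{G\ni x\to z}Pf(x)=f(z)$ exists, and since every metric neighbourhood of~$z$ is also a fine neighbourhood, \eqref{eq-Perron} follows immediately. Suppose henceforth that $z$ is irregular. The sufficiency part of the Wiener criterion then shows that $X\setm G$ is thin at~$z$, and the Kellogg property gives $\Cp(\{z\})=0$, exactly as in Example~\ref{ex-only-fine-cont}. Since $X$ is complete and doubling, and hence proper, the set $\bdy G$ is compact, so $f\in C(\bdy G)$ can be approximated uniformly by a sequence of Lipschitz functions on $\bdy G$ (e.g.\ via the inf-convolutions $f_i(x)=\inf_{y\in\bdy G}(f(y)+i\,d(x,y))$), which after McShane extension become Lipschitz on all of~$X$. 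Because $G$ is bounded, $f_i\in \Np(X)\cap\Np(G)$ and $\lim_{G\ni x\to z}f_i(x)=f_i(z)$, so Corollary~\ref{cor-fine-lim} applies to each $f_i$ and supplies a fine limit
\[
a_i:=\finelim_{G\ni x\to z} h_i(x),
\]
where $h_i$ is the solution of the $\K_{-\infty,f_i}(G)$-obstacle problem.

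The identification $h_i=Pf_i$ q.e.\ in $G$ for Newtonian boundary data is standard in the Bj\"orn--Bj\"orn--Shanmugalingam framework (see \cite{BBS2} and \cite[Section~10]{BBbook}) and is what links the obstacle-problem language of Corollary~\ref{cor-fine-lim} to the Perron formulation in the statement. Combined with the comparison principle for Perron solutions and the trivial invariance $P(f\pm c)=Pf\pm c$ it yields
\[
\sup_G|Pf-Pf_i|\le \|f-f_i\|_{C(\bdy G)}\longrightarrow 0,
\]
so $\{a_i\}$ is Cauchy in $\R$; call its limit $a$. A routine three-$\eps$ argument then delivers $\finelim_{G\ni x\to z} Pf(x)=a$, completing the reduction. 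The main obstacle is precisely the identification $h_i=Pf_i$: everything after it is a soft uniform-convergence argument, so the entire proof stands or falls on being able to invoke this identification cleanly from the existing BB-theory literature. The regular-point case is, by contrast, essentially a quotation from the Wiener/regularity dictionary already recalled in the excerpt.
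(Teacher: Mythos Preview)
Your proof is correct and follows essentially the same route as the paper: uniform Lipschitz approximation of $f$, application of Corollary~\ref{cor-fine-lim} to the approximants, and a sandwich/three-$\eps$ argument to pass to the limit. The paper's version is slightly leaner in that it skips the regular/irregular split (the approximation argument via Corollary~\ref{cor-fine-lim} already covers both cases) and uses a one-sided approximation $f\le\ft\le f+\eps$; the identification $h_i=Pf_i$ you flag as the main concern is indeed available as \cite[Theorem~10.12]{BBbook} (equivalently \cite[Theorem~5.1]{BBS2}), which the paper cites explicitly.
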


\begin{proof}
Let $\eps >0$.
Then there is $ \ft\in \Lipc(X)$ such that
\[
    f \le \ft \le f+\eps
   \quad \text{on } \bdy G.
\]
It follows from the definition of Perron solutions,
and the resolutivity of continuous functions, that
\[
  Pf \le P \ft \le Pf+\eps
   \quad \text{in  } G.
\]
Moreover, $P \ft$ is the continuous
solution of the $\K_{\ft,-\infty}(G)$-obstacle problem,  by
Theorem~5.1 in     Bj\"orn--Bj\"orn--Shan\-mu\-ga\-lin\-gam~\cite{BBS2}
  (or \cite[Theorem~10.12]{BBbook}).
Using Corollary~\ref{cor-fine-lim} and letting $\eps \to 0$
shows that  \eqref{eq-Perron} exists.
\end{proof}

\section{Removability}
\label{sect-remove}

\emph{In this section we assume that $U$ is a quasiopen set.}

\medskip

We conclude the paper by deducing some simple removability results.

\begin{lem} \label{lem-remove}
Let $E$ be a set with $\Cp(E)=0$ and $V=U \cup E$.
If $u \in \Npploc(V)$ is a
fine\/ \textup{(}super\/\textup{)}minimizer in $U$,
and $u$ is extended arbitrarily to $E$,
then $u$ is a fine\/ \textup{(}super\/\textup{)}minimizer in $V$.
\end{lem}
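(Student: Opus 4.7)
The natural approach is to apply the characterization in Lemma~\ref{lem-char-1} in both directions: verify the test-function inequality on $\Np_0(V)$ by reducing it to the corresponding inequality on $\Np_0(U)$, which is available since $u$ is a fine (super)minimizer in $U$.

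First, I would check that $u\in\Npploc(U)$. This follows from the assumption $u\in\Npploc(V)$ because any finely open \p-strict subset $W\Subset U$ is also a finely open \p-strict subset of $V$: a witnessing function $\eta\in \Np_0(U)$ with $\eta=1$ on $W$ extends by zero across $E$ to a function in $\Np_0(V)$, and the compact inclusion $\overline{W}\Subset U\subset V$ is preserved. Hence Lemma~\ref{lem-char-1} may be applied to $u$ on $U$, yielding
\[
\int_{\{\phi\ne 0\}} g_u^p\,d\mu \le \int_{\{\phi\ne 0\}} g_{u+\phi}^p\,d\mu
\]
for every (nonnegative) $\phi\in\Np_0(U)$.

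Next, I would show that this inequality extends to all (nonnegative) $\phi\in\Np_0(V)$. Given such $\phi$, pick a representative $\tilde\phi\in\Np(X)$ with $\tilde\phi=0$ on $X\setm V$. Since $\Cp(E)=0$ and $\Np$-functions are determined up to sets of capacity zero, the modification $\tilde\phi\cdot\chi_{X\setm E}$ lies in the same $\Np(X)$-equivalence class and vanishes on $X\setm U=(X\setm V)\cup E$; thus (after this harmless modification) $\phi$ may be regarded as an element of $\Np_0(U)$. Because $\Cp(E)=0$ forces $\mu(E)=0$, and minimal \p-weak upper gradients are unaffected by sets of capacity zero (and likewise unaffected by arbitrary extensions of $u$ to $E$), the integrals
\[
\int_{\{\phi\ne 0\}} g_u^p\,d\mu \quad\text{and}\quad \int_{\{\phi\ne 0\}} g_{u+\phi}^p\,d\mu
\]
are insensitive to whether $\phi$ is viewed as living on $V$ or on $U$. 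Therefore the inequality from the previous paragraph applies verbatim for each $\phi\in\Np_0(V)$.

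Applying Lemma~\ref{lem-char-1} in the converse direction on $V$ (which is legitimate since $u\in\Npploc(V)$ by hypothesis) then shows that $u$ is a fine (super)minimizer on $V$. The proof for the subminimizer case, and hence for minimizers via Lemma~\ref{lem-fine-min-2}, is identical. There is no real obstacle here: the entire content of the lemma is the easy principle that sets of zero capacity are invisible both to test classes $\Np_0(\,\cdot\,)$ and to minimal \p-weak upper gradients, so the characterization \eqref{eq-fine-supermin-char} automatically transfers between $U$ and $V$.
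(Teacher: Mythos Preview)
Your proposal is correct and follows essentially the same route as the paper's proof: reduce the test-function inequality on $\Np_0(V)$ to $\Np_0(U)$ via Lemma~\ref{lem-char-1}, using that $\Cp(E)=0$ makes the two test classes coincide. One small redundancy: since $u$ is assumed to be a fine (super)minimizer in $U$, the membership $u\in\Npploc(U)$ is already part of that hypothesis by Definition~\ref{def-finesuper}, so your first paragraph deriving it from $u\in\Npploc(V)$ is unnecessary (though harmless).
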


\begin{proof}
By Theorem~\ref{thm-quasiopen-char}, $V$ is quasiopen.
Let $\phi \in \Np_0(V)$.
Since $\phi=0$ q.e.\ in $X\setm U$, also $\phi \in \Np_0(U)$ and
the statement follows directly from Lemma~\ref{lem-char-1}.
\end{proof}

\begin{example} \label{ex-open-plus-pt}
(a) It is not enough to assume that $u \in \Npploc(U)$ in Lemma~\ref{lem-remove}.
Consider e.g.\ $p=2$, $U=B(0,1) \setm \{0\} \subset \R^2$  and
\[
u(x)=\log |x| \in N^{1,2}\loc(U)=N^{1,2}\fineloc(U),
\]
which is harmonic (and thus a fine minimizer) in $U$.
However $u$ has no extension
in $N^{1,2}\loc(V)=N^{1,2}\fineloc(V)$, with $V=B(0,1)$, and in particular no extension
as a fine superminimizer (i.e.\ as a superminimizer because $V$ is open),
even though $\Cp(V \setm U)=0$.

(b) Even if $U = \fineint V$, the assumption $u \in \Npploc(V)$ cannot be replaced by
$u \in \Npploc(U)$ in Lemma~\ref{lem-remove}.
Moreover, fine (super)minimizers on a quasiopen set $V$ can differ
from those on its fine interior $\fineint V$.

To see this, let $1<p<2$,
\[
U=(0,2)\times (-2,2) \quad \text{and} \quad V=U \cup \{(0,0)\}.
\]
This time,
\[
u(x)=|x|^{(p-2)/(p-1)} \in \Nploc(U)=\Npploc(U)
\]
is a fine minimizer (i.e.\ a minimizer) in the open set $U$,
see Example~7.47 in Heinonen--Kilpel\"ainen--Martio~\cite{HeKiMa}.

On the other hand, the set
\[
  W:=\{(x_1,x_2): 0<|x_2|<x_1<1\}  \Subset V
\]
is a \p-strict subset of both $U$ and $V$. This is easily seen by using
\[
h(x_1,x_2)=\eta(x) \min \biggl\{\frac{x_1}{|x_2|}, 1 \biggr\} \in \Np_0(U)
\]
with a suitable cutoff function $\eta$, see Example~5.7 in Bj\"orn--Bj\"orn~\cite{BB}
(or \cite[Example~11.10]{BBbook}).
However, $u \notin \Np(W)$ (and even $u \notin L^p(W)$ if $1 < p< \sqrt{2}$), so $u\notin\Npploc(V)$.
(Since $W \not \Subset U$, we still have $u \in \Npploc(U)$.)
\end{example}

\begin{cor}   \label{cor-remove-finelim}
Let $G$ be an open set and $V=G \setm E$, where $\Cp(E)=0$.
Assume that $u \in \Np(V)$ or $u \in \Nploc(G)$. Also let
\[
  u_*(x) = \fineliminf_{y \to x} u(y),\quad  \text{if } x\in V.
\]
\begin{enumerate}
\item \label{a-a}
If $u$ is a fine superminimizer in $V$,
then $u_*$ is finely continuous in $V$.
\item \label{a-b}
If $u$ is a fine minimizer in $V$,
then $u_*$ is continuous in $V$, with respect to the metric topology.
\end{enumerate}
\end{cor}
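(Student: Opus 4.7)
The plan is to reduce the corollary to standard interior regularity for (super)minimizers on open sets, using the null-set removability result in Lemma~\ref{lem-remove} to pass from the quasiopen set $V$ to the open set $G$.

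First, I would arrange that $u$ (suitably extended to $E$) lies in $\Npploc(G)$. In the case $u\in\Nploc(G)$, this is immediate from Lemma~\ref{lem-open-newton-equal}. In the case $u\in\Np(V)$, I would extend $u$ arbitrarily to $E$; since $\Cp(E)=0$, the family of curves meeting $E$ has zero \p-modulus, so the extension still lies in $\Np(G)\subset\Npploc(G)$. I would then apply Lemma~\ref{lem-remove} with $V$ quasiopen and $V\cup E=G$ to conclude that $u$ is a fine (super)minimizer in $G$, and invoke Corollary~\ref{cor-fine-min} (using that $G$ is open) to upgrade this to $u$ being a (super)minimizer in $G$ in the classical sense.

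Second, I would invoke the standard interior regularity theory on open sets. For part~\ref{a-a}, the superminimizer $u$ on the open set $G$ admits a superharmonic representative $v$ by Kinnunen--Martio~\cite{KiMa02}, and $v$ is finely continuous in $G$ by the results of J.~Bj\"orn and Korte cited in the proof of Lemma~\ref{lem-fine-cont-supermin}. For part~\ref{a-b}, the standard interior continuity theory for \p-minimizers in metric spaces yields a locally H\"older continuous (in particular finely continuous) representative $v$ of $u$ on $G$.

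Finally, I would identify $u_*$ with $v$ on $V$. Since $v=u$ q.e.\ in $G$ and capacity-zero sets are thin at every point, fine liminfs are unaffected by modifying a function on such a set, hence
\[
u_*(x)=\fineliminf_{y\to x} u(y)=\fineliminf_{y\to x} v(y)=v(x) \quad \text{for every } x\in V,
\]
the last equality using fine continuity of $v$. Thus $u_*=v$ on $V$, and $u_*$ inherits fine continuity in case~\ref{a-a} and metric continuity in case~\ref{a-b}. The main obstacle is precisely this pointwise identification: although $v$ agrees with $u$ only quasieverywhere, the fine liminf at each individual point of $V$ is insensitive to capacity-zero modifications, and this is what pins $u_*$ down as the regular representative $v$.
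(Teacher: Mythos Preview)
Your proposal is correct and follows essentially the same route as the paper: extend $u$ to $G$ using $\Cp(E)=0$ (the paper cites \cite[Proposition~1.48]{BBbook} for this), apply Lemma~\ref{lem-remove} and Corollary~\ref{cor-fine-min} to get a (super)minimizer on the open set $G$, pass to the (super)harmonic representative $v$ via Kinnunen--Martio~\cite{KiMa02} (and Kinnunen--Shanmugalingam~\cite{KiSh01} for the continuity in~\ref{a-b}), and then identify $u_*=v$ on $V$ using that $v=u$ q.e.\ and that fine liminfs ignore capacity-zero sets. The paper's write-up is slightly more compressed but the argument is the same.
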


\begin{proof}
If $u \in \Np(V)$ then it follows from
Proposition~1.48 in~\cite{BBbook}
that $\ut \in \Np(G)$, where $\ut$ is any extension of $u$ to $G$.
Thus we can assume that $u \in \Nploc(G)$.
By Lemma~\ref{lem-remove}
and Corollary~\ref{cor-fine-min}, $u$ is a superminimizer in $G$.
It follows from Proposition~7.4 in Kinnunen--Martio~\cite{KiMa02}
(or \cite[Proposition~9.4]{BBbook}),
that $u$ has a
superharmonic representative $v$ such that $v=u$ q.e.\ in $G$.

In \ref{a-a}, $v$ is finely continuous in $G$, by
Bj\"orn~\cite[Theorem~4.4]{JB-pfine} or
Korte~\cite[Theorem~4.3]{korte08} (or \cite[Theorem~11.38]{BBbook}). 
In \ref{a-b}, $v$ is continuous in $G$,
by Kinnunen--Shanmugalingam~\cite[Proposition~3.3 and Theorem~5.2]{KiSh01}
(or \cite[Theorem~8.14]{BBbook}).

As $v=u$ q.e., we have $u_*=v_*=v$ in $V$, which proves the lemma.
\end{proof}

\section{Open problems}
\label{sect-open-prob}

Fine superminimizers and fine supersolutions  can be changed arbitrarily on sets of
capacity zero.
To fix a precise representative, in  potential theory
one usually studies pointwise
defined finely (super)harmonic functions with additional regularity properties,
as used in the proofs of Lemma~\ref{lem-fine-cont-supermin}
  and Corollary~\ref{cor-remove-finelim}.

In this paper, we do not
go further into making a definition
of finely (super)harmonic functions in metric spaces.
Even in the linear case, there have been several different
suggestions for such definitions in the literature,
see Luke\v{s}--Mal\'y--Zaj\'i\v{c}ek~\cite[Section~12.A and Remarks~12.1]{LuMaZa}.
Some definitions have been given in the nonlinear theory on $\R^n$,
but the theory is even less developed and
there are many open questions in this context.
A few of these are listed below.

\begin{openprobs} \label{openprobs-list}
  \quad 
\begin{enumerate}
\renewcommand{\theenumi}{\textup{(\arabic{enumi})}}%
\renewcommand{\labelenumi}{\theenumi}%
\item \label{t-1}
  \emph{Is every finely superharmonic function finely continuous?}
  This is known in the linear case, see~\cite[Theorem~9.10]{Fug}
  and~\cite[Theorem~12.6]{LuMaZa}.
  In the nonlinear case, the best known result is
  Corollary~7.12 in Latvala~\cite{LatPhD}, which says that
  the finely superharmonic functions associated with
  the $n$-Laplacian on unweighted $\R^n$ are approximately
continuous.
\item
  \emph{Does every bounded  fine minimizer $u$ have
  a finely continuous representative $v$ such that $v=u$ q.e?}
  Even this special case of~\ref{t-1} is open in the nonlinear
  case.
  However, on unweighted $\R^n$, with $p=n \ge 2$,
  this fact  was shown   by Latvala~\cite[Lemma~7.15]{LatPhD}.
\item \label{t-3}
  \emph{If $u$ is a fine minimizer,
  is then $u_*$ finite everywhere?}
This seems to be open even in the linear case
on unweighted $\R^n$ (with  $p=2$),
since the connection between
  fine solutions ($=$ fine minimizers)
  and finely harmonic functions
  does not seem to have been touched upon in the linear literature.
\item \label{t-4}
  On unweighted $\R^n$, Latvala~\cite{Lat00}
  showed that $U\setminus E$ is a \p-fine domain 
  if $U$ is a \p-fine domain and $\Cp(E)=0$. As an application of this result a strong version of the
  minimum principle for finely superharmonic functions was obtained.
  \emph{We do not know if the corresponding fine connectedness result holds in our metric setting, or on weighted $\R^n$.}
\end{enumerate}
\end{openprobs}

\end{document}